\documentclass[11pt,a4paper]{article}

\usepackage{amsmath,amscd,amssymb,amsthm}
\usepackage{xspace}
\usepackage[dvips]{epsfig,graphics}
\usepackage{graphicx,psfrag}
\usepackage{tikz}
\usepackage{times}
\usepackage{color}
\usepackage[top=2.6cm, bottom=2.6cm, left=3cm, right=3cm]{geometry}
 \font\tenmsb=msbm10
 \font\sevenmsb=msbm7
 \font\fivemsb=msbm5
 \newfam\msbfam
 \textfont\msbfam=\tenmsb
 \scriptfont\msbfam=\sevenmsb
 \scriptscriptfont\msbfam=\fivemsb

\let\oldenumerate\enumerate
\renewcommand{\enumerate}{
  \oldenumerate
  \setlength{\itemsep}{1pt}
  \setlength{\parskip}{0pt}
  \setlength{\parsep}{0pt}
}

\let\oldenumerate\itemize
\renewcommand{\itemize}{
  \oldenumerate
  \setlength{\itemsep}{1pt}
  \setlength{\parskip}{0pt}
  \setlength{\parsep}{0pt}
}

\parindent 0pt
\parskip 3pt
\title{Towards a Classification of Countable 1-Transitive Trees: Countable Lower 1-Transitive Linear Orders}
\author{Silvia Barbina, Katie Chicot\footnote{The authors wish to thank Professor J.K.Truss and Professor H.D. MacPherson for their extensive help. This paper forms part of the second author's PhD thesis at the University of Leeds, which was supported by EPSRC grant EP/H00677X/1.} \\ The Open University }

\begin{document}

\maketitle
\newcommand{\R}{\ensuremath{\mathbb{R}}}
\newcommand{\Q}{\ensuremath{\mathbb{Q}}}
\newcommand{\Z}{\ensuremath{\mathbb{Z}}}
\newtheorem{lemma}{Lemma}[section]
\newtheorem{theorem}[lemma]{Theorem}
\newtheorem{corollary}[lemma]{Corollary}
\newtheorem{definition}[lemma]{Definition}
\newtheorem{remark}[lemma]{Remark}

\begin{abstract}
This paper contains a classification of countable lower 1-transitive linear orders. This is the first step in the classification of countable 1-transitive trees given in \cite{me}. 
The notion of lower 1-transitivity
generalises that of 1-transitivity for linear orders, and is essential for the structure theory of
 1-transitive
trees. The classification is given in terms of `coding trees'. These describe how a linear order is fabricated from simpler
 pieces using concatenations, lexicographic products and other kinds of construction. We define coding trees and show how they encode lower 1-transitive linear orders. Then we show that a coding tree can
 be recovered from a lower 1-transitive linear order $(X, \leqslant)$ by examining all the invariant partitions on $X$.
\end{abstract}

\section{Introduction}

This paper extends a body of classification results for countably infinite ordered structures,
 under various homogeneity assumptions. As background we mention that
 Morel  \cite{Morel} classified the countable 1-transitive linear orders, of which there are $\aleph_1$,
Campero-Arena and Truss \cite{TrussGabi} extended this classification to
{\em coloured} countable 1-transitive linear orders, and Droste \cite{Droste} classified the
countable 2-transitive trees. The work of Droste was later generalised by Droste, Holland and Macpherson
 \cite{Drost&Holl&Macph}
to give a classification of all countable `weakly 2-transitive' trees; there are $2^{\aleph_0}$ non-isomorphic such trees.
The goal of this paper, and of \cite{me}, is to extend this last classification result to a considerably richer class,
by working under a much weaker symmetry hypothesis, namely 1-transitivity.

We first define the terminology used above and later. A \textbf{tree} is  a partial order
 in which any two elements have a common lower bound and the lower bounds of any element are linearly ordered.
A relational structure is said to be \textbf{$k$-transitive} if for any two isomorphic $k$-element substructures there is an automorphism
taking the first to the second. For partial orders, there is a notion, called \textit{weak 2-transitivity}, that generalises that of 2-transitivity: a partial order is weakly 2-transitive if for any two 2-element chains there is an automorphism taking
 the first to the second (but not necessarily for 2-element antichains).

A weaker notion still
is that of \textit{1-transitivity}.
The classification  of countable 1-transitive trees is considerably more involved  than that of
the weakly-2-transitive trees, and it rests on the classification of countable lower 1-transitive linear orders --- the subject of this paper.

\begin{definition}
\noindent A linear order $(X , \leqslant)$ is \textbf{lower 1-transitive} if
$$(\forall a, b \in X)  \ \{ x \in X : x \leqslant a\} \cong \{x \in X : x \leqslant b\}.$$
\end{definition}

An example of a lower 1-transitive, not 1-transitive linear order
is $\omega^{\ast}$, (that is, $\omega$ reversed).
It is easy to see that any branch (that is, maximal chain) of a 1-transitive tree must be lower 1-transitive, though it is  not necessarily 1-transitive.

 The natural relation of equivalence between lower 1-transitive linear
orders is \textit{lower isomorphism}, rather than isomorphism.

\begin{definition}
Two  linear orders, $(X , \leqslant)$ and  $(Y , \leqslant)$ are  \textbf{lower isomorphic} if
$$(\forall a \in X)(\forall b \in Y) \  \{ x \in X : x \leqslant a\} \cong \{y \in Y : y \leqslant b\}.$$
When this happens, we write $(X , \leqslant) \cong_l (Y , \leqslant)$.

\end{definition}

We shall use interval notation from now on where appropriate, that is,  
$$(-\infty,a]:=\{ x \in X : x \leqslant a\}\, .$$
With this notation, the isomorphisms in the above definitions may then be written more succinctly as $(- \infty , a] \cong (- \infty , b]$.

 The classification of countable lower 1-transitive linear orders is rather involved and so the current paper
 is devoted entirely to this, and the resultant classification of countable 1-transitive trees is deferred to \cite{me}.
 A principal feature of the classification of coloured 1-transitive countable linear orders,
 given in  \cite{gabithesis} and \cite{TrussGabi},
is the use of \textit{coding trees} to describe the construction of the orderings. In these papers, coding trees play a totally different role from that of
the 1-transitive trees which we aim to classify: they are classifiers, rather than structures being classified. In order to emphasise this distinction, and to be consistent
with previous references such as \cite{Droste} and \cite{gabithesis}, we adopt the convention that coding trees
`grow downwards',
that is, any two elements have a common upper bound, and the upper bounds of any element are linearly ordered.

Section~2 of this paper contains the definition of coding tree and related notions. Section~3 describes how to recover a linear order from a coding tree. The main work of the paper is in Section~4, where we show how to construct a coding tree from a linear order.
The main theorem is Theorem~\ref{T4}, which, in conjunction  with Theorem~\ref{T3}, gives our classification.

In order to give the flavour of the classification, we conclude this introduction with some examples of lower 1-transitive  linear orders. First, some notation and terminology are needed.

Let $(A, \leqslant)$,$(B, \leqslant)$ be linear orders; for convenience, we often omit the order symbol. Then $A.B$ denotes the lexicographic product of $A$ and $B$, where for $(a,b), (a^\prime, b^\prime) \in A \times B$, 
$(a, b) \leqslant (a^\prime, b^\prime)$ if and only if $a < a^\prime$, or $a = a^\prime$ and $b \leqslant b^\prime$. Also, $A+B$ denotes $A$ followed by  $B$, that is, the disjoint union of $A$ and $B$ with $a<b$ for all $a\in A$ and $b\in B$. We write $\dot{\Q}$ for ${\Q}+\{+\infty\}$. If $A$ is a linear order, then $A^*$ denotes the ordering with the same domain and the reverse order. If $n \in \mathbb{N} \cup \{ \aleph_0 \}$, then ${\Q}_n$ is a countable dense linear order  coloured by $n$ colours $c_0,\ldots,c_{n-1}$ and such that between any two distinct points there is a point of each colour. Likewise, $\dot{\Q}_n$ is ${\Q}_n +\{+\infty\}$, where the point $+\infty$ is also coloured by any of the $c_i$, or indeed any other colour. If $Y_0,\ldots,Y_{n-1}$ are linear orders, ${\Q}_n(Y_0,\ldots,Y_{n-1})$  denotes the ordering
 obtained by replacing each point of ${\Q}_n$ coloured $c_i$ by a convex copy of $Y_i$ (with the natural induced ordering).
If $n=\aleph_0$, we write
 ${\Q}_{\aleph_0}(Y_0,Y_1,\ldots)$.

The simplest countable lower 1-transitive linear orders are singletons, then  ${\omega}^{\ast}$ and $\Z$ (which are lower isomorphic), and $\Q$  and $\dot{\Q} $ (which are also lower isomorphic). These orders are the basic building blocks for our constructions. We obtain new lower 1-transitive linear orders by concatenating and taking lexicographic products of existing ones. More precisely, Theorem~\ref{T3} implies that if $A$ and $B$ are any lower 1-transitive
linear orders which are lower isomorphic, then ${\omega}^{\ast}.A + B$ is lower 1-transitive. 

For example, a lower isomorphism class (that is, a class of lower-isomorphic linear orders) consists of ${\Z}.{\Z}$, which by convention we write as ${\Z}^{2}$, ${\omega}^{\ast}.{\Z} + {\Z}$ and ${\omega}^{\ast}.{\Z} + {\omega}^{\ast}$.
 Note that we can concatenate ${\omega}^{\ast}.{\Z}$ with either ${\Z}$ or ${\omega}^{\ast}$ and the resulting linear order will still be lower 1-transitive.
 This is because ${\omega}^{\ast}$ has a right-hand endpoint and because ${\Z}$ and ${\omega}^{\ast}$ are lower isomorphic.
 Notice that ${\omega}^{\ast}.A + A \cong {\omega}^{\ast}.A$. We use the former form to streamline subsequent definitions in the paper. A yet more complex lower isomorphism class is that of ${\Z}^{3}$, which includes
 ${\omega}^{\ast}.{\Z}^{2} + {\Z}^{2}$, $ {\omega}^{\ast}.{\Z}^{2} + {\omega}^{\ast}.{\Z} + {\Z}$ and
 $   {\omega}^{\ast}.{\Z}^{2} + {\omega}^{\ast}.{\Z} + {\omega}^{\ast}$.

Theorem~\ref{T3} gives another construction of lower 1-transitive linear orders from existing ones. This construction involves the building block $\Q$: the linear order ${\Q}_n(Y_0,\ldots,Y_{n-1})$ (possibly with $n=\aleph_0$) is lower 1-transitive provided the $Y_{i}$ are lower isomorphic to each other. Moreover, as above, ${\Q}_n(Y_0,\ldots,Y_{n-1})+ Y$ is lower 1-transitive provided $Y$ and the $Y_{i}$ are all lower isomorphic to each other. 
 A simple example is $X = {\Q}_{2}({\omega^{\ast}}, {\Z})$, which is
 countable and lower 1-transitive. Its lower isomorphism class also includes
${\Q}_{2}( {\omega^{\ast}}, {\Z}) + {\Z}$ and ${\Q}_{2}( {\omega^{\ast}}, {\Z}) + {\omega}^{\ast}$.

\section{Coding Trees}

This section introduces coding trees, which carry all the relevant information about lower 1-transitive linear orders.

First, recall that a downwards growing tree $(T,\leqslant)$ is \textbf{Dedekind-MacNeille complete} if its maximal chains are Dedekind-complete in the usual sense,
and if any two incomparable elements have a least upper bound. In fact, this is a special case of a general notion for partial
orders, and the basics are given, for example, in Chapter~7 of \cite{Davey}. Any tree $(T,\leq)$ has a unique (up to isomorphism over $T$) Dedekind-MacNeille completion, that is, a minimal Dedekind-MacNeille complete tree containing it, which is obtained as follows. If $A \subseteq T$ then $A^\mathrm{u}$ denotes the set of upper bounds of $A$ and $A^\mathrm{l}$ the set of lower bounds, that is,
$$A^\mathrm{u}:=\{x \in T : (\forall a \in A) \ (x \geqslant a)  \},\ \mbox{ and}$$
$$A^\mathrm{l}:=\{x \in T : (\forall a \in A) \ (x \leqslant a) \}. \quad$$
A subset $A \neq \emptyset$ is an \textbf{ideal} of $T$ if
 $(A^\mathrm{u})^\mathrm{l}=A$. If $x$ is any vertex of $T$, then the set $\mathrm{I}(x):= \{y \in T : y \leqslant x \}$ is an ideal of $T$. The Dedekind-MacNeille completion of $T$ is the set $\mathrm{I}^D(T)$ of the ideals of $T$ ordered by inclusion. It is easy to see that $T$ embeds in $\mathrm{I}^D(T)$ via the map which takes $x \in T$ to $\mathrm{I}(x) \in \mathrm{I}^D(T)$.
\begin{definition}
If $(T,\leqslant)$ is a downward growing tree, and $x\in T$, then a \textbf{child} of $x$ is some $y$ such
that $y<x$ and there is no $z\in T$ with $y<z<x$. If $x$ is a child of $y$ then $y$ is a \textbf{parent} of $x$.
 We write ${\rm child}(x)$ for the set of children of $x$.
A \textbf{leaf} of $(T,\leqslant)$ is some $x\in T$ such that there is no $y \in T$ with $y < x$. We write ${\rm leaf}(T)$ for the set of leaves of $(T,\leqslant)$.

 A \textbf{levelled tree} is a downward growing tree $(T , \leqslant)$ together with a partition, $\pi$, of $T$ into
 maximal antichains, called \textbf{levels}, such that $\pi$ is linearly ordered by  $\ll$ so that $x \leqslant y$ in $T$ implies
that the level containing $x$ is below the level containing $y$ in the  $\ll$ ordering.

 A \textbf{leaf-branch} $B$ of a (levelled) $(T,\leqslant)$ is a maximal chain of $T$ which contains a leaf.

The supremum of two incomparable points (which exists in the Dedekind-MacNeille completion of $T$,
 even if not in $T$ itself) is called a \textbf{ramification point}.

If $x \in T$ then the relation $\asymp_{x}$ on
$\{ y \in T : y < x \}$ given by 
\[a \asymp_{x} b\ \text{  if there is } c \in T \, \text{ such that } a, b \leqslant c < x\]
 is an equivalence
relation. The equivalence classes are called \textbf{cones} at $x$.
\end{definition}

In the definitions of \textit{right} and \textit{left children} and \textit{coding trees} below, a tree $(T,\leqslant)$ is equipped with a labelling, that is, each vertex is labelled
 by one of the symbols $\Z$, ${\omega}^{\ast}$,
$\Q$, $\dot{\Q}$, ${\Q}_{n}$, ${\dot{\Q}}_n$ ( for $2 \leqslant n \leqslant {\aleph}_{0}$), $\{ 1 \}$ (singleton), or $\mathrm{lim}$.
 Isomorphisms between such trees are required to preserve the labelling.

\begin{definition}\label{A2} Let $x$ be a vertex of $T$ and $\triangleleft$ a linear order on ${\rm child}(x)$. If a vertex is labelled by one of ${\omega}^{\ast}$, ${\dot{\Q}}$ and ${\dot{\Q}}_{n}$, the \textbf{right} child of that vertex is the child which is greatest under the $\triangleleft$ ordering. All the remaining children are \textbf{left} children. If a vertex is labelled by one of $\Z$, $\Q$ or ${\Q}_{n}$, we consider all its
 children to be \textbf{left} children.

The \textbf{left forest} of a vertex is defined to be the partially ordered set consisting of the
left children of the given vertex together with their descendants, with the induced structure of levels and labels.

Two forests are \textbf{isomorphic} provided the subtrees rooted at the 
greatest elements in each forest can be put into one-to-one correspondence in such a way that they are isomorphic as trees.
 \end{definition}
Thus, an isomorphism between two forests preserves the levelling and the labelling, but it is not required to preserve the $\triangleleft$ ordering among children.

\begin{definition}
\label{A1}
A \textbf{coding tree} has the form $( T, \leqslant , \triangleleft, \varsigma , \ll)$ where\\
1. $T$ is a  levelled tree with a greatest element, the root. The tree ordering is $\leqslant$, $\triangleleft$ is a linear ordering on the set of children of each parent and $\ll$ is the ordering of the levels.\\
2. There are countably many leaves.\\
3. Every vertex is a leaf or is above a leaf.\\
4. $T$ is Dedekind-MacNeille complete.\\
5. The vertices are labelled by $\varsigma$, the labelling function, which assigns to the vertices one of the following labels: $\Z$, ${\omega}^{\ast}$,
$\Q$, $\dot{\Q}$, ${\Q}_{n}$, ${\dot{\Q}}_n$ ( for $2 \leqslant n \leqslant {\aleph}_{0}$), $\{ 1 \}$ (singleton), or $\mathrm{lim}$.\\
6. For any two vertices $x_{i}$ and $x_{j}$ on the same level, $\varsigma (x_{i}) \  {\cong}_{l}  \ \varsigma (x_{j})$ or $\varsigma (x_{i}) \  = \ \varsigma (x_{j})$.\\
7. For any vertex $x$ of the tree:\\
if $\varsigma (x) = {\Z}$ or $\Q$ then $x$ has one child;\\
if $\varsigma (x) = {\omega}^{\ast}$ or $\dot{\Q}$ then $x$ has two children;\\
if $\varsigma (x) = {\Q}_{n}$ then $x$ has $n$ children;\\
if $\varsigma (x) = {\dot{\Q}}_n$ then $x$ has $n +1$ children;\\
if $\varsigma (x) = \{ 1 \} $ then $x$ is a leaf and has no children;\\
if $\varsigma (x) = \mathrm{lim}$ then there is only one cone at $x$ ($x$ is not a leaf and has no children).\\
8. At each given level of $T$, the left forests of vertices at that level are all isomorphic in the sense of Definition~\ref{A2}.\\
9. If $x$ is a parent vertex and $y_{0}, y_{1}$ are two of its left children, then the subtrees with roots $y_{0}, y_{1}$ are not isomorphic.
\end{definition}
We will not explain how to define a linear order from  a coding tree until Section~\ref{sec3}, but we illustrate Definition~\ref{A1} in Figure~\ref{Fig1}, where we give the coding trees for the lower 1-transitive linear orders in the lower isomorphism class of $\Z^3$, that is, $\Z^3$, $\omega^*.\Z^2 + \Z^2$, $\omega^*.\Z^2 + \omega^*.\Z +\Z$ and $\omega^*.\Z^2 + \omega^*.\Z +\omega^*$.

\newpage
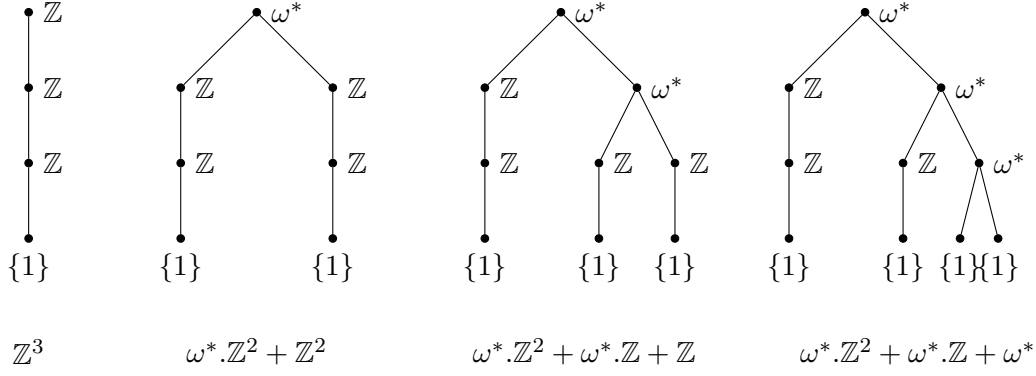
\begin{figure}
\begin{tikzpicture}
  \node [draw,circle,inner sep = 1pt,fill, label=right:$\Z$] (0) at (-6, 2) {};
  \node [draw,circle,inner sep = 1pt,fill, label=right:$\Z$] (1) at (-6, -0) {};
  \node [draw,circle,inner sep = 1pt,fill, label=right:$\Z$] (2) at (-4, -0) {};
  \node [draw,circle,inner sep = 1pt,fill, label=right:$\Z$] (3) at (-4, 1) {};
  \node [draw,circle,inner sep = 1pt,fill, label=right:$\Z$] (4) at (-2, 1) {};
  \node [draw,circle,inner sep = 1pt,fill, label=right:$\Z$] (5) at (-2, -0) {};
  \node [draw,circle,inner sep = 1pt,fill, label=right:${\omega}^{\ast}$] (6) at (-3, 2) {};
  \node [draw,circle,inner sep = 1pt,fill, label=right:$\Z$] (7) at (0, 1) {};
  \node [draw,circle,inner sep = 1pt,fill, label=right:$\Z$] (8) at (0, -0) {};
  \node [draw,circle,inner sep = 1pt,fill, label=right:${\omega}^{\ast}$] (9) at (1, 2) {};
  \node [draw,circle,inner sep = 1pt,fill, label=right:${\omega}^{\ast}$] (10) at (2, 1) {};
  \node [draw,circle,inner sep = 1pt,fill, label=right:$\Z$] (11) at (1.5, -0) {};
  \node [draw,circle,inner sep = 1pt,fill, label=right:$\Z$] (12) at (2.5, -0) {};
  \node [draw,circle,inner sep = 1pt,fill, label=right:$\Z$] (13) at (4, -0) {};
  \node [draw,circle,inner sep = 1pt,fill, label=right:$\Z$] (14) at (4, 1) {};
  \node [draw,circle,inner sep = 1pt,fill, label=right:${\omega}^{\ast}$] (15) at (5, 2) {};
  \node [draw,circle,inner sep = 1pt,fill, label=right:${\omega}^{\ast}$] (16) at (6, 1) {};
  \node [draw,circle,inner sep = 1pt,fill, label=right:$\Z$] (17) at (5.5, -0) {};
  \node [draw,circle,inner sep = 1pt,fill, label=right:${\omega}^{\ast}$] (18) at (6.5, -0) {};
  \node [draw,circle,inner sep = 1pt,fill, label=right:$\Z$] (19) at (-6, 1) {};
  \node [draw,circle,inner sep = 1pt,fill, label=below:$\{1\}$] (21) at (-6, -1) {};
  \node [draw,circle,inner sep = 1pt,fill, label=below:$\{1\}$] (22) at (-4, -1) {};
  \node [draw,circle,inner sep = 1pt,fill, label=below:$\{1\}$] (23) at (-2, -1) {};
  \node [draw,circle,inner sep = 1pt,fill, label=below:$\{1\}$] (24) at (0, -1) {};
  \node [draw,circle,inner sep = 1pt,fill, label=below:$\{1\}$] (25) at (4, -1) {};
  \node [draw,circle,inner sep = 1pt,fill, label=below:$\{1\}$] (26) at (1.5, -1) {};
  \node [draw,circle,inner sep = 1pt,fill, label=below:$\{1\}$] (27) at (2.5, -1) {};
  \node [draw,circle,inner sep = 1pt,fill, label=below:$\{1\}$] (28) at (5.5, -1) {};
  \node [draw,circle,inner sep = 1pt,fill, label=below:$\{1\}$] (29) at (6.25, -1) {};
  \node [draw,circle,inner sep = 1pt,fill, label=below:$\{1\}$] (30) at (6.75, -1) {};
  \node at (-6, -2.5) {$\Z^3$};
   \node at (-3, -2.5) {$\omega^*.\Z^2 + \Z^2$};
  \node at (1.5, -2.5) {$\omega^*.\Z^2 + \omega^*.\Z +\Z\quad$};
  \node at (5.5, -2.5) {$\quad \omega^*.\Z^2 + \omega^*.\Z +\omega^*$};
  \draw  (0) to (1);
  \draw  (6) to (3);
  \draw  (3) to (2);
  \draw  (6) to (4);
  \draw  (4) to (5);
  \draw  (9) to (7);
  \draw  (7) to (8);
  \draw  (9) to (10);
  \draw  (10) to (11);
  \draw  (10) to (12);
  \draw  (15) to (14);
  \draw  (14) to (13);
  \draw  (15) to (16);
  \draw  (16) to (17);
  \draw  (16) to (18);
  \draw  (1) to (21);
  \draw  (2) to (22);
  \draw  (5) to (23);
  \draw  (8) to (24);
  \draw  (11) to (26);
  \draw  (12) to (27);
  \draw  (13) to (25);
  \draw  (17) to (28);
  \draw  (18) to (29);
  \draw  (18) to (30);
\end{tikzpicture}
\caption{Coding trees for lower 1-transitive linear orders in the lower isomorphism class of $\Z^3$} 
\label{Fig1}
\end{figure}


In order to recover a lower 1-transitive linear order from a coding tree, we will need \textit{expanded coding
trees}, which are closely related to coding trees and are defined next. The reason why we need expanded coding trees should become clear in Section~\ref{sec3}. In place of a labelling function on vertices, expanded coding trees carry,  as part of the structure,
a total ordering on the set of children of each vertex, induced by a binary relation~$\triangleleft$. In general, a coding tree and the corresponding expanded coding tree do not have the same vertex set.
 For instance, a point of the expanded
coding tree corresponding to a point labelled $\dot{\Q}$ in the coding tree will have infinitely many children in the expanded coding tree. All the children but the last one are associated with the left child in the coding tree.
The idea is that a lower 1-transitive  linear order  $(X,\leqslant)$ lives on the set of leaves of the expanded coding tree, so the expanded coding tree facilitates the transition
 between coding tree and encoded order.

\begin{definition}
\label{A3}  An \textbf{expanded coding tree} is a structure of the form $( E, \leqslant ,  \ll, \triangleleft)$ where:\\
1. $E$ is a  levelled tree with a greatest element, the root, denoted by $r$. The tree ordering is $\leqslant$, $\ll$ is the ordering of the levels and $ \triangleleft$ is the ordering of the children of each parent vertex.\\
2. $(E,\triangleleft)$ is a partial ordering consisting of a disjoint union of antichains whose elements are exactly the levels of $(E,\leqslant,\ll)$.\\
3. $(E,\leqslant)$ has at most countably many leaves.\\
4. Every vertex of $(E,\leqslant)$ is a leaf or is above a leaf.\\
5. $(E,\leqslant)$ is Dedekind-MacNeille complete.\\
6. If a vertex has any children, then their $\triangleleft$-order type is one of the following; $\Z$, ${\omega}^{\ast}$,
$\Q$, $\dot{\Q}$, ${\Q}_{n}$ or ${\dot{\Q}}_n$ for $2 \leqslant n \leqslant {\aleph}_{0}$.\\
7. Any two vertices $x$ and $x'$ on the same level  are either both parent vertices, or they are both leaves, or
they both  have exactly one cone below them. If $x$ and $x'$ are both parent vertices, then
$({\rm child}(x),\triangleleft) \cong_l ({\rm child}(x'),\triangleleft)$. \\
8. For any parent vertex $x$ of the tree, one of the following holds:
\begin{enumerate}
\item[(i)] the $\triangleleft$-order type of ${\rm child}(x)$ is ${\Z}$, $\Q$, $ {\omega}^{\ast}$ or $\dot{\Q}$ and the left trees rooted at the children of $x$ are all isomorphic, or
\item[(ii)] the children of $x$ are densely ordered by $\triangleleft$ and the trees rooted at the children of $x$
 fall into $n \geqslant 2$ isomorphism classes and this makes them isomorphic to $ {\Q}_{n}$
(for $2 \leqslant n \leqslant {\aleph}_{0}$), or
\item[(iii)] the left children are as in (ii) above, and $x$ has a right child and this makes $\mathrm{child}(x)$ order-isomorphic to ${\dot{\Q}}_n$.
\end{enumerate}
9. At each given level of $E$ the left forests (see Definition~\ref{A2}) from that level are order-isomorphic (meaning that $\leqslant ,  \ll, \triangleleft$ are preserved).
\end{definition}
In 8(ii), we mean that if the elements of ${\rm child}(x)$ are coloured according to the isomorphism type of the trees below
 them, then the corresponding coloured linear order
(with respect to $\triangleleft$) is isomorphic to ${\Q}_n$; likewise in  8(iii).

\section{Construction of a Linear Order from a Coding Tree} \label{sec3}

In this section we describe the relationship between a coding tree and expanded coding tree, and explain how the latter determines a lower 1-transitive linear order. For the coding trees in Figure~\ref{Fig1} it is possible to start either at the root, or at the leaves, and inductively proceed through the tree to determine
 the linear order encoded by it. However, Definition~\ref{A1} does not imply that the levels of a coding tree are well ordered or conversely well ordered. Consider the example in Figure~\ref{Fig2}. 
 
\newpage
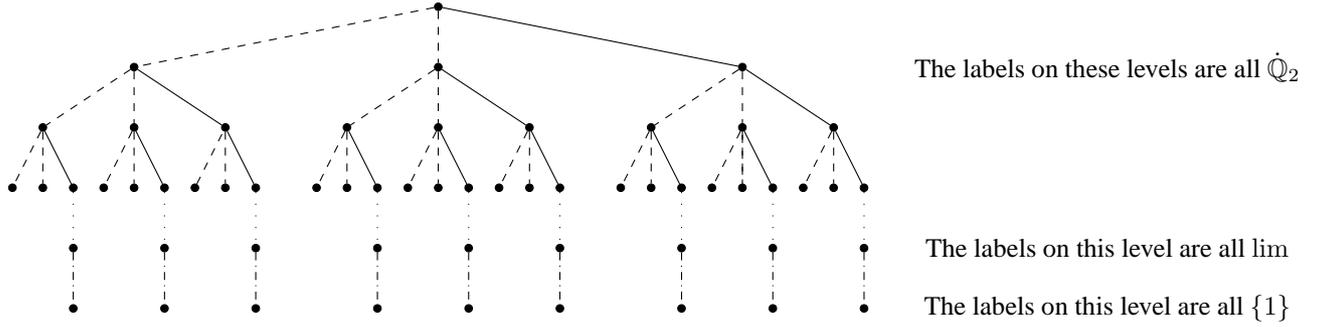
\begin{figure}
\begin{tikzpicture}[scale=0.8]
  \node [draw,circle,inner sep = 1pt,fill] (0) at (0, 3) {};
  \node [draw,circle,inner sep = 1pt,fill] (1) at (0, 2) {};
  \node [draw,circle,inner sep = 1pt,fill] (2) at (0, 1) {};
  \node [draw,circle,inner sep = 1pt,fill] (3) at (1.5, 1) {};
  \node [draw,circle,inner sep = 1pt,fill] (4) at (-1.5, 1) {};
  \node [draw,circle,inner sep = 1pt,fill] (5) at (-0.5, -0) {};
  \node [draw,circle,inner sep = 1pt,fill] (6) at (0, -0) {};
  \node [draw,circle,inner sep = 1pt,fill] (7) at (0.5, -0) {};
  \node [draw,circle,inner sep = 1pt,fill] (8) at (1, -0) {};
  \node [draw,circle,inner sep = 1pt,fill] (9) at (1.5, -0) {};
  \node [draw,circle,inner sep = 1pt,fill] (10) at (2, -0) {};
  \node [draw,circle,inner sep = 1pt,fill] (11) at (-1, -0) {};
  \node [draw,circle,inner sep = 1pt,fill] (12) at (-1.5, -0) {};
  \node [draw,circle,inner sep = 1pt,fill] (13) at (-2, -0) {};
  \node [draw,circle,inner sep = 1pt,fill] (14) at (3, -0) {};
  \node [draw,circle,inner sep = 1pt,fill] (15) at (3.5, -0) {};
  \node [draw,circle,inner sep = 1pt,fill] (16) at (4, -0) {};
  \node [draw,circle,inner sep = 1pt,fill] (17) at (4.5, -0) {};
  \node [draw,circle,inner sep = 1pt,fill] (18) at (5, -0) {};
  \node [draw,circle,inner sep = 1pt,fill] (19) at (5.5, -0) {};
  \node [draw,circle,inner sep = 1pt,fill] (20) at (6, -0) {};
  \node [draw,circle,inner sep = 1pt,fill] (21) at (6.5, -0) {};
  \node [draw,circle,inner sep = 1pt,fill] (22) at (7, -0) {};
  \node [draw,circle,inner sep = 1pt,fill] (23) at (3.5, 1) {};
  \node [draw,circle,inner sep = 1pt,fill] (24) at (5, 1) {};
  \node [draw,circle,inner sep = 1pt,fill] (25) at (6.5, 1) {};
  \node [draw,circle,inner sep = 1pt,fill] (26) at (-3, -0) {};
  \node [draw,circle,inner sep = 1pt,fill] (27) at (-3.5, -0) {};
  \node [draw,circle,inner sep = 1pt,fill] (28) at (-4, -0) {};
  \node [draw,circle,inner sep = 1pt,fill] (29) at (-4.5, -0) {};
  \node [draw,circle,inner sep = 1pt,fill] (30) at (-5, -0) {};
  \node [draw,circle,inner sep = 1pt,fill] (31) at (-5.5, -0) {};
  \node [draw,circle,inner sep = 1pt,fill] (32) at (-6, -0) {};
  \node [draw,circle,inner sep = 1pt,fill] (33) at (-6.5, -0) {};
  \node [draw,circle,inner sep = 1pt,fill] (34) at (-7, -0) {};
  \node [draw,circle,inner sep = 1pt,fill] (35) at (-6.5, 1) {};
  \node [draw,circle,inner sep = 1pt,fill] (36) at (-5, 1) {};
  \node [draw,circle,inner sep = 1pt,fill] (37) at (-3.5, 1) {};
  \node [draw,circle,inner sep = 1pt,fill] (38) at (-5, 2) {};
  \node [draw,circle,inner sep = 1pt,fill] (39) at (5, 2) {};
  \node [draw,circle,inner sep = 1pt,fill] (40) at (2, -1) {};
  \node [draw,circle,inner sep = 1pt,fill] (41) at (2, -2) {};
  \node [draw,circle,inner sep = 1pt,fill] (42) at (0.5, -1) {};
  \node [draw,circle,inner sep = 1pt,fill] (43) at (0.5, -2) {};
  \node [draw,circle,inner sep = 1pt,fill] (44) at (4, -1) {};
  \node [draw,circle,inner sep = 1pt,fill] (45) at (4, -2) {};
  \node [draw,circle,inner sep = 1pt,fill] (46) at (5.5, -1) {};
  \node [draw,circle,inner sep = 1pt,fill] (47) at (5.5, -2) {};
  \node [draw,circle,inner sep = 1pt,fill] (48) at (7, -1) {};
  \node [draw,circle,inner sep = 1pt,fill] (49) at (7, -2) {};
  \node [draw,circle,inner sep = 1pt,fill] (50) at (-1, -1) {};
  \node [draw,circle,inner sep = 1pt,fill] (51) at (-1, -2) {};
  \node [draw,circle,inner sep = 1pt,fill] (52) at (-3, -1) {};
  \node [draw,circle,inner sep = 1pt,fill] (53) at (-3, -2) {};
  \node [draw,circle,inner sep = 1pt,fill] (54) at (-4.5, -1) {};
  \node [draw,circle,inner sep = 1pt,fill] (55) at (-4.5, -2) {};
  \node [draw,circle,inner sep = 1pt,fill] (56) at (-6, -1) {};
  \node [draw,circle,inner sep = 1pt,fill] (57) at (-6, -2) {};
  \node  at (11, 2) {\small The labels on these levels are all ${\dot{\Q}}_2$};
  \node at (11, -1) {\small The labels on this level are all $\mathrm{lim}$};
  \node at (11,-2) {\small The labels on this level are all $\{1\}$};
  \draw  (0) to (39);
  \draw  (39) to (25);
  \draw  (25) to (22);
  \draw  (24) to (19);
  \draw  (23) to (16);
  \draw  (3) to (10);
  \draw  (2) to (7);
  \draw  (4) to (11);
  \draw  (37) to (26);
  \draw  (36) to (29);
  \draw  (35) to (32);
  \draw  (38) to (37);
  \draw  (1) to (3);
  \draw [dashed] (0) to (38);
  \draw [dashed] (0) to (1);
  \draw [dashed] (1) to (2);
  \draw [dashed] (1) to (4);
  \draw [dashed] (2) to (6);
  \draw [dashed] (4) to (12);
  \draw [dashed] (4) to (13);
  \draw [dashed] (39) to (23);
  \draw [dashed] (39) to (18);
  \draw [dashed] (25) to (21);
  \draw [dashed] (25) to (20);
  \draw [dashed] (24) to (17);
  \draw [dashed] (24) to (18);
  \draw [dashed] (28) to (37);
  \draw [dashed] (37) to (27);
  \draw [dashed] (23) to (15);
  \draw [dashed] (23) to (14);
  \draw [dashed] (3) to (9);
  \draw [dashed] (3) to (8);
  \draw [dashed] (2) to (5);
  \draw [dashed] (38) to (35);
  \draw [dashed] (38) to (36);
  \draw [dashed] (35) to (34);
  \draw [dashed] (35) to (33);
  \draw [dashed] (36) to (31);
  \draw [dashed] (36) to (30);
  \draw [loosely dotted] (32) to (56);
  \draw [loosely dotted] (29) to (54);
  \draw [loosely dotted] (26) to (52);
  \draw [loosely dotted] (11) to (50);
  \draw [loosely dotted] (7) to (42);
  \draw [loosely dotted] (10) to (40);
  \draw [loosely dotted] (16) to (44);
  \draw [loosely dotted] (19) to (46);
  \draw [loosely dotted] (22) to (48);
  \draw [dashdotted] (54) to (55);
  \draw [dashdotted] (56) to (57);
  \draw [dashdotted] (52) to (53);
  \draw [dashdotted] (50) to (51);
  \draw [dashdotted] (42) to (43);
  \draw [dashdotted] (40) to (41);
  \draw [dashdotted] (44) to (45);
  \draw [dashdotted] (46) to (47);
  \draw [dashdotted] (48) to (49);
\end{tikzpicture} 
\caption{A coding tree that is neither well founded nor conversely well founded} 
\label{Fig2}
\end{figure}

In this tree, there are infinitely many levels of vertices labelled $\dot{\Q}_2$. The branches are maximal chains which will eventually constantly descend through the right children of $\dot{\Q}_2$. This tree is a coding tree, yet it is neither well founded nor conversely well founded. Examples of this kind are the reason why expanded coding tree are necessary to recover a lower 1-transitive linear order from a coding tree. 

 As in  \cite{TrussGabi}, we start by defining a map which
 associates an expanded coding tree to a coding tree.

\begin{definition}
\label{d1}  Let $( T, \leqslant ,  \varsigma , \ll, \triangleleft)$ be a coding tree, and $( E, \leqslant ,  \ll, \triangleleft)$ be
 an expanded coding tree. We say that $E$ is \textbf{associated} with $T$ via $\phi$ if there is a function $\phi: E \to T$ which
 takes the root $r$ of $E$ to the root of $T$, each leaf of $E$ to some leaf of $T$, and\\
(i) $v_{1} \leqslant v_{2}$ $\implies$ $\phi(v_{1}) \leqslant \phi(v_{2})$,\\
(ii) $\phi$ induces an order-isomorphism from the set of levels of $E$ (ordered by $\ll$) to the set of levels of $T$.
\\
(iii) for each vertex $v$ of $E$, $\phi$ maps $\{ u \in E : u \leqslant v \}$ onto $\{ u \in T : u \leqslant \phi(v) \}$, and
for any leaf $l$ of $E$, $\phi$ maps $[l, r]$ onto $[\phi(l), \phi(r)]$,\\
(iv) for each parent vertex $v$ of $E$, one of the following holds:\\
- $\varsigma(\phi(v)) =$ $\Z$, ${\omega}^{\ast}$, $\Q$, $\dot{\Q}$, and this is
the order type of the children of $v$ under $\triangleleft$; \\
-  $\varsigma(\phi(v)) =$  ${\Q}_{n}$, ${\dot{\Q}}_n$ (for
 $2 \leqslant n \leqslant {\aleph}_{0}$) and for any left children $u$, $u^\prime$ of $v$, if the trees rooted at $u$ and $u^\prime$ are isomorphic then $\phi(u)=\phi(u^\prime)$;\\
- $\varsigma(\phi(v)) = \mathrm{lim}$ if $v$ is
neither a parent nor a leaf (in which case $v$ has just one cone); \\
- $\varsigma(\phi(v)) = \{ 1 \}$ if $v$ is a leaf.

The map $\phi$ is said to be an \textbf{association map} between $T$ and $E$.
\end{definition}


We are now in a position to say explicitly how a tree encodes a linear order. First note
that if $E$ is an expanded coding tree, then there is a natural linear order on ${\rm leaf}(E)$ which we denote by $\triangleleft^*$ and call the \textbf{leaf order}.
If $x,y$ are leaves, we write $x\triangleleft^*y$ if there are $x',y'\in E$ with $x\leqslant x'$, $y\leqslant y'$,
and $x' \triangleleft y'$.

\begin{definition}
\label{d2}
The coding tree $( T, \leqslant ,  \varsigma , \ll, \triangleleft)$ \textbf{encodes} the linear order $(X, \leqslant)$ if there is an expanded coding tree associated with $T$ such that $X$ is (order) isomorphic to the set of leaves of $E$ under the leaf order induced by $\triangleleft$.
\end{definition}

In Theorem~\ref{T1} below, we show how to recover a linear order from a coding tree. In order to do this, we need to define certain functions called \textit{decoding functions}, whose domains are the leaf--branches of a given coding tree and which take a vertex $x$ to an element of the ordered set $\varsigma(x)$.  To cut down to a countable set of functions, even when the coding tree is not well founded or conversely well founded, we begin by choosing arbitrary default values for each of the labels. For each of $\Z$ and $\Q$, there is one default value. In the cases of ${\omega}^{\ast}$, $\dot{\Q}$ and ${\dot{\Q}}_n$, there are two default values, one for the end points and one other. In the case of ${\Q}_{n}$, there are $n$ default values, one of each `colour', whereas $\dot{\Q}_n$ has the same default values as $\Q_n$ plus an additional one for the endpoint.

\begin{definition}
\label{d3}  A  \textbf{decoding function} is a  function $f$  defined on a leaf-branch $B$ of $T$ and such that \\[3pt]
-- the set of non-default values taken by $f$ is finite \\[1pt]
-- for each $x \in B$ with $\varsigma (x) \neq$ $\mathrm{lim}$, $f(x) \in \varsigma (x)$ \\[1pt]
-- if $x$ is a parent vertex and a left child of $x$ is in $\mathrm{dom}\, f$, then $f(x) \neq d_e$, where $d_e$ is the default value for the endpoint \\[1pt]
-- if $x$ is a parent vertex and the right child of $x$ is in $\mathrm{dom}\, f$, then $f(x) = d_e$, \\[1pt]
-- if $\varsigma(x) = \Q_n\,$ or $ \dot{\Q}_n \,$ and $\mathrm{dom}\, f$ contains a left child of $x$ with `colour' $m$, then $f(x)$ has the colour $m$\\[3pt]
If $\varsigma (x) = \, \mathrm{lim}$, we consider $f$ to be undefined at $x$.
\end{definition}

\begin{theorem}
\label{T1}
Any coding tree encodes some linear order, and any two linear orders encoded by the same coding tree are isomorphic.
\end{theorem}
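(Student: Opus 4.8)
The plan is to prove the two halves of Theorem~\ref{T1} separately: existence of an encoded order, and uniqueness up to isomorphism. For existence, I would first construct an expanded coding tree $E$ associated with the given coding tree $T$ via some association map $\phi$, and then read off the leaf order $\triangleleft^*$ on $\mathrm{leaf}(E)$. The construction of $E$ proceeds level by level, following the $\ll$-ordering of levels prescribed by Definition~\ref{A1}. At each vertex $x$ of $T$, Definition~\ref{d1}(iv) dictates how to expand: where $\varsigma(x)$ is one of $\Z,{\omega}^{\ast},\Q,\dot{\Q}$ we give $x$ a set of children whose $\triangleleft$-order type is exactly $\varsigma(x)$; where $\varsigma(x)={\Q}_n$ or ${\dot{\Q}}_n$ we blow up each left child into a densely-ordered family of $\triangleleft$-copies coloured by isomorphism type, recovering ${\Q}_n$ or ${\dot{\Q}}_n$ as in Definition~\ref{A1}.8(ii)--(iii); where $\varsigma(x)=\mathrm{lim}$ we leave a single cone; and where $\varsigma(x)=\{1\}$ we stop. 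I would then verify that the resulting $E$ satisfies all nine clauses of Definition~\ref{A3}, checking in particular that the Dedekind-MacNeille completeness (clause~5) and the at-most-countably-many-leaves condition (clause~3) survive the expansion; countability follows because $T$ has countably many leaves and each expansion step introduces only countably many new branches.

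The genuinely delicate point in the existence half is that $T$ need be neither well founded nor conversely well founded, as Figure~\ref{Fig2} shows, so the expansion cannot be defined by a naive transfinite recursion up or down the levels. This is exactly where decoding functions (Definition~\ref{d3}) earn their place. Rather than building $E$ by unbounded recursion, I would use the decoding functions on leaf-branches of $T$ to parametrise the leaves of $E$: the constraint that only finitely many non-default values occur ensures that the set of decoding functions is countable even on an ill-founded tree, and each decoding function picks out a coherent choice of position within $\varsigma(x)$ at every vertex $x$ along a leaf-branch. I would identify $\mathrm{leaf}(E)$ with a suitable set of decoding functions and define $\triangleleft^*$ on them by comparing, at the highest vertex where two functions first branch apart, their values under the induced child-ordering. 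Verifying that this yields a well-defined linear order, and that it agrees with the leaf order coming from any genuinely constructed associated $E$, is the technical heart of the argument.

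For uniqueness, suppose $E$ and $E'$ are two expanded coding trees both associated with $T$, via association maps $\phi$ and $\phi'$. The goal is an order-isomorphism $\mathrm{leaf}(E)\to\mathrm{leaf}(E')$ with respect to the leaf orders. The strategy is to build a level-preserving tree isomorphism $E\to E'$ compatible with $\phi,\phi'$, again working level by level. At each vertex, Definition~\ref{d1}(iv) pins down the $\triangleleft$-order type of the children from the label $\varsigma(\phi(v))$, and the clause forcing $\phi(u)=\phi(u')$ whenever the trees below left children $u,u'$ are isomorphic guarantees that the combinatorial data carried by the two expansions over the same vertex of $T$ coincide; condition~9 of Definition~\ref{A3} (isomorphism of left forests across a level) and condition~9 of Definition~\ref{A1} (non-isomorphism of distinct left subtrees) together ensure the child-matchings can be made coherently and without ambiguity. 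One then checks that such a tree isomorphism carries $\triangleleft^*$ to $\triangleleft^*$, since the leaf order is defined purely in terms of $\triangleleft$ at ramification points, which the isomorphism preserves.

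I expect the main obstacle to be the ill-foundedness illustrated in Figure~\ref{Fig2}: handling the existence and coherence of the expansion and the matching in the absence of either well-foundedness or converse well-foundedness. Concretely, the difficulty is that there is no top or bottom level at which to anchor a recursion, so one cannot simply define the isomorphism (or the expansion) by induction on levels; instead the decoding functions must be used to give a direct, non-recursive description of $\mathrm{leaf}(E)$ and its order, and one must argue that the finitely-many-non-default-values condition makes everything simultaneously coherent along every leaf-branch. Managing this while respecting the $\Q_n$ and $\dot{\Q}_n$ expansions, where the dense colouring must be reconstructed correctly, is where the careful bookkeeping lies.
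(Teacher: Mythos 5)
Your existence half is essentially the paper's own argument. The paper takes $\Sigma_T$ to be the set of decoding functions on leaf-branches of $T$, ordered by comparing values at the greatest vertex where two functions differ (the finitely-many-non-default-values condition supplies such a vertex when the domains agree, and Dedekind-MacNeille completeness supplies a ramification point in both domains when they do not), and then builds the associated expanded coding tree directly as $E = \{ (x, f\upharpoonright(x,r]) : f \in \Sigma_T,\ x \in \mathrm{dom}\, f \}$, with the association map $\phi$ being projection to the first coordinate. So your instinct to abandon the level-by-level recursion and let the decoding functions carry the whole construction is exactly what the paper does; the one refinement is that the paper does not build $E$ first and then read off its leaves --- the decoding functions simultaneously constitute the linear order and, via their restrictions, the vertices of $E$.

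Where you genuinely diverge, and where your proposal has a gap, is uniqueness. You propose to construct a level-preserving tree isomorphism $E \to E'$ between two expanded coding trees associated with $T$ by working level by level, and you then correctly observe that this recursion has nothing to anchor to when $T$ is neither well founded nor conversely well founded --- but you leave that obstacle unresolved, only gesturing at decoding functions as a remedy. The paper sidesteps the issue entirely: any linear order encoded by $T$ is the leaf order of an associated expanded coding tree, hence is countable by clause 3 of Definition~\ref{A3}, and the paper concludes with a back-and-forth argument between the two countable linear orders themselves, never attempting a tree isomorphism. (Admittedly the paper gives this in a single sentence, leaving the construction of the partial isomorphisms to the reader; but the point is that back-and-forth on countable orders needs no first or last level to start a recursion from, which is precisely what defeats your tree-isomorphism strategy.) To repair your version you would either switch to that back-and-forth, or prove the statement you allude to but do not establish: that every expanded coding tree associated with $T$ has leaf order isomorphic to the canonical $\Sigma_T$.
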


\begin{proof}
We proceed as in \cite{TrussGabi}. Given a coding tree $T$, we construct an expanded coding tree which is associated with $T$ as in Definition~\ref{d1}.

Let $\Sigma_{T}$ be the set of decoding functions on $T$ ordered by $<$, where for $f, g \in \Sigma_T$, $f < g$ if  $f(x_0) < g(x_0)$ with $x_0$ the greatest point for which $f(x) \neq g(x)$. We show that $<$ is a linear ordering. Let $f$ and $g$ be decoding functions such that $f \neq g$. Consider the greatest vertex $x$ such that  $f(x) \neq g(x)$. Such a vertex exists since for $\mathrm{dom}\,f = \mathrm{dom} g$, $f$ and $g$ differ only finitely often. In the case where $\mathrm{dom\,}f \neq \mathrm{dom}\, g$, we appeal to the fact that $T$ is Dedekind-MacNeille complete and therefore it contains all its ramification points, hence there is a vertex $x \in\mathrm{dom}\, f \cap \mathrm{dom}\, g$ such that  $f(x) \neq g(x)$. Then $f(x) < g(x) \Rightarrow f < g$ and  $f(x) > g(x) \Rightarrow f > g$. It is clear this relation is irreflexive and transitive, hence  $(\Sigma_{T}, <)$ is a linear order.

In order for $T$ to encode $(\Sigma_{T}, <)$ according to Definition~\ref{d2}, we must produce an expanded coding tree associated with $T$. Such a tree is given by
$$ E = \{ (x, f\upharpoonright(x, r]) : f \in \Sigma_{T}, x \in \mathrm{dom}\, f \}.$$
The tree ordering is given by letting $(x, f\upharpoonright(x, r]) \leqslant (y, f\upharpoonright(y, r])$ if $x \leqslant y \in \mathrm{dom}\, f$. In addition $(v_{1}, f\upharpoonright(v_{1}, r]) $ is level with $ (v_{2}, g\upharpoonright(v_{2}, r])$ if and only if $v_{1}$ is level with $v_{2}$. It is now clear that $E$ is a levelled tree. Its root is $(r, \varnothing)$. Also, any $(x, f\upharpoonright(x, r])$ lies above a leaf $(l, f\upharpoonright(l, r])$ where $l$ is a leaf in $\mathrm{dom}\, f$.

Each leaf-branch of $E$ is isomorphic to a leaf-branch of $T$, and so it is Dedekind complete. Furthermore, since $T$ contains all its ramification points, so does $E$, and therefore $E$ is Dedekind-MacNeille complete.

Now we consider the possible order types of sets of children of vertices of $E$. Let $(x, f\upharpoonright(x, r])$ be a parent vertex in $E$. Then $x$ is a parent vertex in $T$. The order type of the children of  $(x, f\upharpoonright(x, r])$ is determined by 
\[\{(x, g\upharpoonright[x, r]) : g \in \Sigma_{T}, x \in \mathrm{dom}\,g,  f\upharpoonright(x, r] = g\upharpoonright(x, r] \}\, . \]
Since $x$ is a parent vertex, $f(x) \in \varsigma(x)$. Hence the order type of the children of $(x, f\upharpoonright(x, r])$ is equal to the label of $x$ in $T$. In the case of $\varsigma(x) = {\Q}_{n}$ or ${\dot{\Q}}_n$ we may say that the `coloured' order type of the children in $E$ is ${\Q}_{n}$ or ${\dot{\Q}}_n$.\\
If $(x, f\upharpoonright(x, r])$ is neither a parent vertex nor a leaf, then $x$ is neither a parent nor a leaf, and so $x$ is labelled $\mathrm{lim}$.

The mapping $\phi$ is given by  $\phi((x, f\upharpoonright(x, r])) = x$. This preserves root, leaves and, as we have just seen, it preserves the relation between labels of vertices in $T$ and the (coloured) order type of the children of those vertices in $E$. Also $x \leqslant y \Rightarrow \phi(x) \leqslant \phi(y)$ and it is clear that for each vertex $x$ of $E$, $\phi$ maps $\{ u \in E : u \leqslant x \}$ onto $\{ u \in T : u \leqslant \phi(x) \}$ and for any leaf $l$ of $E$, $\phi$ maps $[l, r]$ onto $[\phi(l), \phi(r)]$. Therefore $E$ is associated with $T$ and $\Sigma_{T}$ is order isomorphic to the set of leaves of $E$. Hence $T$ encodes $\Sigma_{T}$.

A back-and-forth argument shows that any two countable linear orders encoded by the same coding tree $( T, \leqslant ,  \varsigma , \ll, \triangleleft)$ are isomorphic.
\end{proof}

\begin{theorem}
\label{T3}
The ordering $\Sigma_{T}$ encoded by the coding tree $( T, \leqslant ,  \varsigma , \ll, \triangleleft)$ is countable and lower 1-transitive.
\end{theorem}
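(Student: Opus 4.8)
The plan is to establish two separate facts about $\Sigma_T$: countability and lower 1-transitivity. Countability should follow quickly from the definition of decoding function. Since each $f \in \Sigma_T$ takes only finitely many non-default values, and there are countably many leaf-branches of $T$ (because $T$ has countably many leaves by clause~2 of Definition~\ref{A1}), I would argue that $\Sigma_T$ is a countable union of countable sets: for each leaf-branch $B$ and each finite set $S \subseteq B$ of vertices where a non-default value may occur, there are only finitely many ways to assign non-default values (each label has finitely many values available, with the $\Q_n$ and $\dot\Q_n$ cases contributing at most $\aleph_0$ choices, but the colour is pinned down by the children in the domain). Summing over the countably many finite subsets of each of the countably many leaf-branches yields a countable total. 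This is the routine half.

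The substantive half is lower 1-transitivity: I must show that for any two decoding functions $f, g \in \Sigma_T$, the initial segments $(-\infty, f]$ and $(-\infty, g]$ are isomorphic. The natural approach is to build an isomorphism by comparing the two leaf-branches $\mathrm{dom}\,f$ and $\mathrm{dom}\,g$ level by level, using the structural constraints imposed by the coding tree axioms. The key observation is that clause~6 of Definition~\ref{A1} forces vertices on the same level to carry labels that are equal or lower-isomorphic, and clauses~8 and~9 control how the left forests repeat across a level. First I would identify, for a given decoding function $f$, exactly which leaves lie below $f$ in the leaf order $\triangleleft^*$; a leaf $h \triangleleft^* f$ precisely when, at the greatest vertex $x$ where $h$ and $f$ branch apart, $h$ descends into a child $\triangleleft$-below the child into which $f$ descends. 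Thus the order type of $(-\infty, f]$ is assembled, vertex by vertex down the branch $\mathrm{dom}\,f$, from the left forests hanging to the $\triangleleft$-left of $f$ at each vertex, concatenated according to the level ordering.

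The heart of the argument is then to show this assembled order type depends only on the labels along the branch and not on the particular decoding function, at least up to isomorphism. Here I would invoke clause~8: at each level the left forests are all order-isomorphic, so the contribution to $(-\infty, f]$ coming from a vertex $x \in \mathrm{dom}\,f$ with a given label depends only on that label and on the value $f(x)$ chosen in $\varsigma(x)$ (which fixes how much of the $\triangleleft$-ordered children lies to the left). Since by clause~6 the labels on corresponding levels of $\mathrm{dom}\,f$ and $\mathrm{dom}\,g$ are lower-isomorphic, and since the constraints on decoding functions (the endpoint conditions and the colour-matching condition) guarantee the left-hand contributions match up as lower-isomorphic pieces, one builds a level-by-level correspondence that patches into an isomorphism of the full initial segments. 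Because $T$ need not be well founded (as Figure~\ref{Fig2} illustrates), this patching cannot proceed by a naive induction from the leaves or the root; instead I would use a back-and-forth or a transfinite argument over the linearly ordered set of levels, exactly as the proof of Theorem~\ref{T1} handled non-well-foundedness by appealing to Dedekind-MacNeille completeness and the finiteness of non-default values.

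The main obstacle, I expect, is precisely this non-well-foundedness: showing that the level-by-level isomorphisms cohere into a single global isomorphism of $(-\infty, f]$ with $(-\infty, g]$ when there is neither a bottom level to start from nor a top to descend from. The finiteness of non-default values is the crucial lever, since it means that outside a finite set of levels the decoding functions behave in a uniform, default way, so the infinitary part of each initial segment is a canonical object determined by the labels alone; the genuine work of matching is confined to finitely many levels, where a direct comparison using clauses~6 and~8 suffices. I would therefore structure the proof to first dispose of the cofinally-default tail by a uniformity argument and then handle the finite remainder explicitly.
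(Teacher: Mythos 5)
Your proposal is correct and takes essentially the same approach as the paper: both arguments decompose $(-\infty,f]$ into the convex blocks of elements that first branch away from $f$ at a given level, match the block at each level with the corresponding block for $g$ using the lower isomorphism of labels (clause 6 of Definition~\ref{A1}) together with the isomorphism of left forests (clause 8), and then patch the blockwise isomorphisms together. Your worry about non-well-foundedness at the patching stage is unnecessary: the blocks are pairwise disjoint convex pieces indexed in both initial segments by the same linearly ordered set of levels, so the blockwise isomorphisms union directly into a global isomorphism with no induction or back-and-forth, and the finiteness of non-default values is needed only to verify that the image of each block consists of genuine decoding functions.
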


\begin{proof}
 The way in which  $\Sigma_{T}$ has been defined ensures that it is countable.

We now show that $\Sigma_{T}$ is lower 1-transitive. Take any $f, g \in \Sigma_{T}$ and consider the initial segments $( -\infty , f]$ and $( -\infty , g]$. Now $\Sigma_{T}$ is defined to be the set of all functions on the leaf-branches of $T$ which take a default value at all but finitely many points. By definition of the ordering on $\Sigma_{T}$, an initial segment of $\Sigma_{T}$ at $f$ can be written as
$$( -\infty , f] = \{ f \}  \cup \{ p \in \Sigma_{T} :  (\exists x \in \mathrm{dom}\, f )( p(x) < f(x)) \wedge (\forall y > x)( p(y) = f(y)) \}.$$
Let $L_{i}$ be the $i$th level of the tree, and let
$$\Gamma^{f}_{i} = \{ p \in ( -\infty , f] : p(x^{f}_{i}) < f(x^{f}_{i}) \wedge (\forall y > x^{f}_{i})( p(y) = f(y)) \},$$
where $x^{f}_{i}$ denotes the element of $\mathrm{dom}\, f$ on the level $L_{i}$, and $x_{i}$ denotes the element of $\mathrm{dom}\, p$ in $L_{i}$, where $p$ is a typical member of $\Sigma_{T}$.)

Then, by definition of the $\ll$ - ordering of the levels, it is clear that $( -\infty , f]$ is the disjoint union of all the $\Gamma^{f}_{i}$, and furthermore that $i \ll j \Rightarrow \Gamma^{f}_{i} > \Gamma^{f}_{j}$ (where this means that every element of $\Gamma^{f}_{i}$ is greater than every element of $\Gamma^{f}_{j}$). Since the same is true of the $\Gamma^{g}_{i}$, to show that $( -\infty , f] \cong ( -\infty , g]$, it suffices to show that $\Gamma^{f}_{i} \cong \Gamma^{g}_{i}$ for each $i$, and the desired isomorphism from  $( -\infty , f]$ to $ ( -\infty , g]$ is obtained by patching together all the individual isomorphisms.

We remark that for $\varsigma (x^{f}_{i}) = \, \mathrm{lim}$, we have $\Gamma^{f}_{i} = \varnothing$. The label $\mathrm{lim}$ is not a linear order, so by condition 3 of Definition~\ref{A1}, if a vertex on level $i$ is labelled $\mathrm{lim}$, then all vertices on level $i$ are labelled $\mathrm{lim}$. This shows that when $i$ is  a level with vertices labelled $\mathrm{lim}$, we have $\Gamma^{f}_{i} \cong \Gamma^{g}_{i}$. 

We now consider the cases where the vertices on level $i$ are not labelled $\mathrm{lim}$. There is an isomorphism $\varphi$ from $( - \infty , f(x^{f}_{i})] \cap \varsigma (x^{f}_{i}) $ to $( - \infty , g(x^{g}_{i})] \cap \varsigma (x^{g}_{i})$. Moreover, there is an isomorphism $\psi$ between the left forests at the points $x_i^f$ and $x_i^g$. Now let $x_j$ be the member of $\mathrm{dom}\,p$ at the level $L_j$ for a typical $p$. We now define
\begin{displaymath}
\Phi_{i} (p) (\psi (x_j))=  \left\{ \begin{array} {ll}
 g(x^{g}_{j}) & \textrm{ if } j > i\\
\varphi (p(x_{j})) & \textrm{ if } j = i\\
p(x_{j}) & \textrm{ if } j < i\\
\end{array} \right.
\end{displaymath}
where $p \in \Gamma^{f}_{i}$.

We must now show that $\Gamma^{f}_{i}$ is mapped 1-1 into $\Gamma^{g}_{i}$ by $ \Phi_{i}$. This gives our result. We have that $ \Phi_{i}(p) \in \Sigma_{T}$, because all such $ \Phi_{i}(p)$ are defined on leaf-branches of $T$ and they take a default value at all but finitely many points, since both $ g(x^{g}_{j})$ and $p(x_{j})$ take the default value at all but finitely many points (possibly with $\varphi (p(x_{j}))$ in addition).

It is easy to see that $\Phi_i$ is surjective. For injectivity, suppose $ \Phi_{i}(p_{1}) =  \Phi_{i}(p_{2})$. Then since $p_{1} , p_{2} \in \Gamma^{f}_{i}$, $p_{1} (x_{j}) = p_{2} (x_{j}) = f(x^{f}_{j})$ for all $j > i$ and $p_{1} (x_{j}) = p_{2} (x_{j})$ for $j < i$ by the third clause. Since $\varphi$ is an isomorphism, $\varphi (p_{1} (x_{i})) = \varphi (p_{2} (x_{i}))$ implies that $p_{1} (x_{j}) = p_{2} (x_{j})$. Hence $p_{1} (x_{j}) = p_{2} (x_{j})$ for all $j$.\\
\end{proof}

\section{Construction of a Coding Tree from a Linear Order}

In this section we complete the classification by showing that  any countable and lower 1-transitive $(X, \leqslant)$ is encoded by a suitable coding tree. We first find the associated expanded coding tree for $(X, \leqslant)$. This is done by building a tree of \textit{invariant partitions} of $(X, \leqslant)$ in the sense of Definition~\ref{D2} below. We show this is in fact an expanded coding tree for $(X, \leqslant)$ and then give the association map between them.

\begin{definition}
\label{D2}
An \textbf{invariant partition} of $X$ is a partition $\pi$ into convex subsets, called \textbf{parts}, which is invariant under lower isomorphims of $(X, \le)$ into itself. That is, for any $a, b \in X$, any order isomorphism $f : ( -\infty , a] \to ( - \infty , b]$, and any $x, y \leqslant a$,
$$x \sim_{\pi} y \iff f(x) \sim_{\pi} f(y).$$
\end{definition}
The family $I$ of all parts of invariant partitions of $X$ is partially ordered by inclusion. This allows us to define a levelled tree structure on $I$.
\begin{definition}
\label{D3}
For a lower 1-transitive linear order $(X, \le)$, the \textbf{invariant tree} associated with $X$ is the levelled tree $I$ whose vertices are parts in the invariant partitions of $X$ ordered by $\subseteq$ in such a way that
\begin{itemize}
\item[(i)] $X \in I$ is the root
\item[(ii)] each level is an invariant partition of $X$
\item[(iii)] the leaves are the singletons $\{ x \}$ for $x \in X$
\item[(iv)] every invariant partition of $X$ into convex subsets of $X$ is represented by a level of vertices in $I$.
\end{itemize}
\end{definition}
We remark that $I$ has a root since $X$ is itself lower 1-transitive and a convex subset of $X$. Moreover, the parts of any invariant partition of $X$ are lower isomorphic and lower 1-transitive. Lemmas~\ref{F2} and~\ref{F3} show that for any countable, lower 1-transitive linear order, the family $I$ is a levelled tree, thereby justifying the description \emph{the} invariant tree. The proof of Lemma~\ref{F2} is left to the reader.

\begin{lemma}
\label{F2}
If $(X, \leqslant)$ is a countable lower 1-transitive linear order and $\pi$ is an invariant partition of $X$, then  $X/_{\sim_{\pi}}$ is also a countable lower 1-transitive linear order with the ordering induced by $(X, \leqslant)$.
\end{lemma}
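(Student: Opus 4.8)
The plan is to verify the two required properties of the quotient separately: first that $X/_{\sim_\pi}$ is a countable linear order, and second that it is lower $1$-transitive. Countability is immediate since $X$ is countable and the quotient map is surjective. For the linear order structure, I would use the fact that the parts of $\pi$ are convex subsets of $X$: given two distinct parts $P, Q$, convexity guarantees that either every element of $P$ precedes every element of $Q$ or vice versa, so setting $[x] \leqslant [y]$ iff $x' \leqslant y'$ for some (equivalently all) representatives $x' \in [x]$, $y' \in [y]$ yields a well-defined total order. I would remark that this is the natural order on the quotient induced by $(X, \leqslant)$, and that it is a genuine linear order (transitivity, antisymmetry and totality all follow from convexity and the fact that $\sim_\pi$ is an equivalence relation).

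The heart of the proof is lower $1$-transitivity of $X/_{\sim_\pi}$. I would take two parts $P$ and $Q$, pick representatives $a \in P$ and $b \in Q$, and aim to produce an isomorphism $(-\infty, P] \to (-\infty, Q]$ in the quotient, where $(-\infty, P]$ denotes the set of parts that are $\leqslant P$. The key idea is to lift the problem back to $X$: since $(X, \leqslant)$ is lower $1$-transitive, there is an order isomorphism $f : (-\infty, a] \to (-\infty, b]$. The invariance of $\pi$ (Definition~\ref{D2}) is exactly what guarantees that $f$ respects the partition on the initial segment, that is, $x \sim_\pi y \iff f(x) \sim_\pi f(y)$ for all $x, y \leqslant a$. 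Hence $f$ descends to a well-defined map $\bar f$ on the parts lying weakly below $a$, sending the part of $x$ to the part of $f(x)$.

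The main step is then to check that $\bar f$ is an order isomorphism from the initial segment of $X/_{\sim_\pi}$ below $P$ onto the initial segment below $Q$. Surjectivity and order-preservation follow from the corresponding properties of $f$ together with the invariance condition (in both directions), and injectivity follows from the ``iff'' in the invariance condition. One point requiring care is matching up the initial segments correctly: the parts weakly below $P$ in the quotient correspond, via the quotient map, exactly to the equivalence classes of points in $(-\infty, a]$ that happen to lie entirely within $(-\infty, a]$, and I would verify using convexity of the parts that the part of any $x \leqslant a$ is contained in $(-\infty, a]$ together with possibly $P$ itself, so that $\bar f$ has the right domain and codomain.

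I expect the main obstacle to be bookkeeping at the boundary, namely the treatment of the parts $P$ and $Q$ themselves and ensuring the domain of $\bar f$ is precisely $(-\infty, P]$ rather than something slightly larger or smaller. Concretely, $f$ is defined on all of $(-\infty, a]$, which may include points of $P$ other than $a$; I must confirm that $f$ maps $(-\infty, a] \cap P$ into (a single part, namely) $Q$ and that this is compatible with sending the top part $P$ to the top part $Q$. The invariance condition applied with $x = a$ handles this, since any $y \leqslant a$ with $y \sim_\pi a$ has $f(y) \sim_\pi f(a) = b$. Once this boundary identification is pinned down, the isomorphism $\bar f$ witnesses $(-\infty, P] \cong (-\infty, Q]$ in the quotient, establishing lower $1$-transitivity.
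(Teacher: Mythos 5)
Your proof is correct. The paper itself gives no argument for Lemma~\ref{F2} (it is explicitly left to the reader), and your proposal supplies exactly the intended one: convexity of the parts yields the induced linear order on the quotient, and the invariance condition of Definition~\ref{D2} is precisely what allows a lower isomorphism $f : (-\infty, a] \to (-\infty, b]$ of $X$ to descend to a well-defined order isomorphism of initial segments in $X/_{\sim_\pi}$, with your boundary check (that the part of any $x \leqslant a$ other than $P$ lies entirely in $(-\infty, a]$, and that $f$ sends $P \cap (-\infty,a]$ into $Q$) closing the only delicate point.
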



\begin{definition} Let $\pi_i, \pi_j$ be invariant partitions of $(X, \leqslant)$. We say that $\pi_{i} $ is a \textbf{refinement} of $\pi_{j}$ if every element of $\pi_{j}$ is a union of members of $\pi_{i}$.\end{definition}

\begin{lemma}
\label{F3}
Given any two nontrivial invariant partitions $\pi_{1} , \pi_{2}$ of $X$ into convex subsets of $X$, one is a refinement of the other, and moreover $\pi_1$ and $\pi_2$ have no part in common.
\end{lemma}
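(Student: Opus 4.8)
The plan is to analyse each invariant partition through the way it cuts each initial segment. For a point $a \in X$ and an invariant partition $\pi$, write $[a]_\pi$ for the part containing $a$ and set $A_\pi(a) := [a]_\pi \cap (-\infty, a]$. Since $[a]_\pi$ is convex and contains the maximum $a$ of $(-\infty, a]$, the set $A_\pi(a)$ is a final segment (upward-closed subset) of $(-\infty, a]$. The crucial elementary observation is that the final segments of any linear order are totally ordered by inclusion, so for each fixed $a$ the two sets $A_{\pi_1}(a)$ and $A_{\pi_2}(a)$ are comparable: one contains the other.

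Next I would upgrade this pointwise comparability to a uniform one using lower 1-transitivity together with invariance. Given any two points $a, b$, lower 1-transitivity supplies an order isomorphism $f : (-\infty, a] \to (-\infty, b]$, which necessarily sends the maximum $a$ to the maximum $b$. Invariance of each $\pi_i$ (Definition~\ref{D2}) then forces $f(A_{\pi_i}(a)) = A_{\pi_i}(b)$ for $i = 1, 2$: indeed, taking $y = a$ in the definition gives $x \sim_{\pi_i} a \iff f(x) \sim_{\pi_i} b$. Since $f$ is a bijection it preserves inclusions, so $A_{\pi_1}(a) \subseteq A_{\pi_2}(a)$ holds at one point if and only if it holds at every point. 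I then fix the direction, say $A_{\pi_1}(a) \subseteq A_{\pi_2}(a)$ for all $a$.

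From this the refinement can be read off directly. If $P$ is a $\pi_1$-part and $x \leqslant y$ both lie in $P$, then $x \in A_{\pi_1}(y) \subseteq A_{\pi_2}(y)$, so $x \sim_{\pi_2} y$; hence all elements of $P$ are pairwise $\pi_2$-equivalent and $P$ lies inside a single $\pi_2$-part, giving that $\pi_1$ refines $\pi_2$. For the second assertion, observe first that equality $A_{\pi_1}(a) = A_{\pi_2}(a)$ at a single point would propagate to all points by the same transport argument, and combined with the refinement just proved would yield $\pi_1 = \pi_2$; so for the distinct partitions of the statement the inclusion is strict, $A_{\pi_1}(a) \subsetneq A_{\pi_2}(a)$ for every $a$. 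If some part $P$ lay in $\pi_1 \cap \pi_2$, I would pick $a \in P$, whence $[a]_{\pi_1} = P = [a]_{\pi_2}$ and therefore $A_{\pi_1}(a) = P \cap (-\infty, a] = A_{\pi_2}(a)$, contradicting strictness. Thus $\pi_1$ and $\pi_2$ share no part.

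The step I expect to require the most care is the transport argument establishing uniformity: one must verify that the isomorphism supplied by lower 1-transitivity genuinely carries $A_{\pi_i}(a)$ \emph{onto} $A_{\pi_i}(b)$, and not merely onto some convex subset. This is precisely where invariance of the partition is essential, used through the fact that an order isomorphism between two initial segments must match up their maxima. Everything else reduces to routine bookkeeping with convex sets and final segments.
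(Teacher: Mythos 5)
Your proof is correct, and it takes a genuinely different route from the paper's. The paper proves the refinement claim by contradiction: if neither partition refined the other, there would be witnesses $x \sim_{1} y$, $x \nsim_{2} y$ and $u \nsim_{1} v$, $u \sim_{2} v$; it transports $(-\infty,y]$ onto $(-\infty,v]$ by an isomorphism $f$ and uses convexity twice to trap $f(x)$ strictly between $u$ and $v$, so that $f(x) \sim_{2} v$, contradicting invariance; the absence of a common part is then handled by a second, separate transport-and-contradiction argument. You instead make a structural observation: because each part is convex and contains the maximum of $(-\infty,a]$, its trace $A_{\pi}(a)$ is a final segment of $(-\infty,a]$, and final segments of a linear order are automatically nested, so the two partitions are comparable at every point with no case analysis at all; lower 1-transitivity plus invariance is needed only to make the direction of the inclusion uniform across points (and you rightly flag the one delicate step, namely that the transported image is \emph{onto} $A_{\pi_i}(b)$, which holds because $f$ must carry maximum to maximum). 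This buys you a stronger intermediate fact --- for distinct partitions the traces are \emph{strictly} nested at every point --- from which both assertions of the lemma drop out at once, the second without any further appeal to transitivity; it also replaces proof by contradiction with a direct argument. What the paper's approach buys in exchange is economy: no auxiliary notion is introduced, just two short arguments straight from the definitions. Both proofs must implicitly read ``two partitions'' as ``two distinct partitions'' for the no-common-part claim; you make this explicit, which is a small point in your favour.
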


\begin{proof}
Let $\sim_1, \sim_2$ be the equivalence relations defining $\pi_{1} , \pi_{2}$ respectively. 
We want to show that
$$ (\forall x , y \in X)(x \sim_1 y \Rightarrow  x \sim_2 y) \vee (\forall x , y \in X)(x \sim_2 y \Rightarrow  x \sim_1 y). $$
Suppose both disjuncts are false. Then there are $x, y, u, v$ such that
\begin{itemize}
\item $x \sim_{1} y$ and  $x \nsim_{2} y$, and
\item $u \nsim_{1} v$ and $u \sim_{2} v$.
\end{itemize}


We may assume that $x < y$ and $u < v$. Let $f : (- \infty , y] \to (- \infty , v]$ be an isomorphism. Then
$f(x) < v$ and $f(x) \sim_1 v$. Moreover, we must have $u < f(x)$, otherwise  $u \sim_1 v$ by convexity. So
$u < f(x) < v$, and therefore $f(x) \sim_2 v$ by convexity. However, $x \nsim_2y$ implies $f(x) \nsim_2 v$, which is a contradiction.

Without loss of generality, assume that $\pi_1$ is a refinement of $\pi_2$. We want to show that $\pi_1 \cap \pi_2 = \varnothing$. So suppose for a contradiction that there is $p \in \pi_1 \cap \pi_2$, and let $x, y \in X$ be such that $x \nsim_1y$ and $x \sim_2y$. Pick $z \in p$ and let
$g : (- \infty , y] \to (- \infty , z]$ be an isomorphism. Then $g(x) \sim_2 z$, and since $p \in \pi_1 \cap \pi_2$, we have $g(x) \sim_1 g(y)$, contradicting our choice of $x$ and $y$.

\end{proof}
The next lemma proves the Dedekind-MacNeille completeness of the invariant tree.
\begin{lemma}
\label{F4}
The invariant tree $I$ of a lower 1-transitive linear order $(X, \leqslant)$ is Dedekind-MacNeille complete.
\end{lemma}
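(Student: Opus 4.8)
The plan is to show that the invariant tree $I$ satisfies the two defining requirements of Dedekind-MacNeille completeness stated in the excerpt: first, that every maximal chain in $I$ is Dedekind-complete (as a linear order under $\subseteq$), and second, that any two incomparable elements of $I$ (two parts that are $\subseteq$-incomparable) have a least upper bound in $I$. The natural candidates for the suprema and infima I will need are built directly from the set-theoretic structure of the parts, so the whole proof will hinge on checking that these candidate sets are again parts of invariant partitions, i.e. that they are convex and that the resulting partition is invariant in the sense of Definition~\ref{D2}.

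First I would treat the least upper bound of two incomparable parts. Suppose $p, q \in I$ are $\subseteq$-incomparable parts coming from invariant partitions $\pi_p, \pi_q$. By Lemma~\ref{F3}, one of these partitions refines the other, say $\pi_p$ refines $\pi_q$; but then every part of $\pi_p$ is contained in a part of $\pi_q$, so $p \subseteq q'$ for the $q' \in \pi_q$ containing it, and similarly $q$ sits inside some part of the coarser partition. The incomparability of $p$ and $q$ together with the refinement relation forces a clean description of their join: the least upper bound should be the smallest part, among all invariant partitions, that contains both $p$ and $q$. I would argue this join exists by considering the coarser of the two partitions and taking the part of it that contains $p$ (equivalently, by the refinement structure, the part containing $q$), and then verifying minimality using the fact, from Lemma~\ref{F3}, that any two nontrivial invariant partitions are linearly ordered by refinement. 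The key point is that the chain of invariant partitions through $p$ is totally ordered by refinement, so there is no ambiguity in which part sits immediately above.

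Next I would handle Dedekind-completeness of maximal chains. A maximal chain in $I$ corresponds, via the refinement ordering, to a maximal chain of invariant partitions, each supplying the part along the chain that contains a fixed point or shrinks down to it. Given a chain $C$ of parts with no greatest element, I would form the union $\bigcup C$ and, dually for infima, the intersection $\bigcap C$ (or the relevant convex closure), and then show this set is again convex and belongs to an invariant partition, so that it lies in $I$ and serves as the supremum (respectively infimum) of $C$. Convexity of a union or intersection of a chain of convex sets is immediate; the real content is \emph{invariance}. Here I would use Definition~\ref{D2} directly: given a lower isomorphism $f : (-\infty, a] \to (-\infty, b]$, the equivalence relation cut out by $\bigcup C$ is preserved because it is a union (or intersection) over $C$ of relations each preserved by $f$, and a union or intersection of $f$-invariant equivalence relations is again $f$-invariant. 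I would also invoke Lemma~\ref{F2} to confirm that the associated quotient remains a countable lower 1-transitive linear order, so the limiting partition genuinely belongs to the family $I$ via clause (iv) of Definition~\ref{D3}.

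\textbf{The main obstacle} I anticipate is verifying that the limiting set obtained from a chain (the union or intersection) is actually a \emph{part of an invariant partition} and not merely a convex invariant set in isolation --- that is, that the whole partition it belongs to is invariant, globally, across all of $X$, rather than just convex locally. The subtlety is that invariance in Definition~\ref{D2} is a condition on the entire equivalence relation $\sim_\pi$ under all lower isomorphisms $f : (-\infty,a] \to (-\infty,b]$, so I must check that passing to a limit of a chain of invariant equivalence relations does not destroy this global closure property. I expect to resolve this by showing that the collection of $f$-invariant convex equivalence relations is closed under arbitrary unions and intersections of chains (a lattice-theoretic closure argument), and then using Lemma~\ref{F3} to guarantee that the resulting relation is comparable to every other invariant partition, so it slots correctly into the levelled tree as a genuine vertex of $I$.
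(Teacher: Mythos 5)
Your handling of chains is sound and is essentially the paper's own argument: the paper likewise takes a descending chain of parts $p_\gamma$ belonging to invariant partitions $\pi_\gamma$, intersects the equivalence relations (defining $x \sim y$ iff $x \sim_\gamma y$ for all $\gamma$), observes that invariance passes to this limit because each $\sim_\gamma$ is invariant under every lower isomorphism, notes that the parts of the limit relation are convex, and concludes via Definition~\ref{D3} that they are vertices of $I$. The genuine gap is in your first paragraph, on least upper bounds of two incomparable parts $p$ and $q$. Your proposed witness --- ``the part of the coarser of the two partitions that contains $p$'' --- is not an upper bound of $q$ at all: if $\pi_p$ refines $\pi_q$ and $q'$ is the part of $\pi_q$ containing $p$, then $q' \neq q$ (otherwise $p \subseteq q$, contradicting incomparability), and distinct parts of a single partition are disjoint, so $q \not\subseteq q'$. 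Your parenthetical ``equivalently, by the refinement structure, the part containing $q$'' is likewise false: the part of $\pi_q$ containing $q$ is $q$ itself, which does not contain $p$. So neither of the two given partitions contains any part lying above both $p$ and $q$, and Lemma~\ref{F3} alone cannot manufacture one; minimality is not the issue, existence is.

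What is actually needed --- and what the paper does --- is to reduce the binary supremum to the chain case rather than treat them as two independent verifications. Fix $a \in p$ and $b \in q$. The invariant partitions possessing a part that contains both $a$ and $b$ form, by Lemma~\ref{F3}, a descending chain under refinement (nonempty, since $\{X\}$ is such a partition), and the corresponding parts containing both points form a descending chain of vertices of $I$ bounded below. Its infimum exists by the chain argument (your second paragraph supplies exactly this), it contains both $a$ and $b$, hence by Lemma~\ref{F3} and disjointness of parts it contains both $p$ and $q$, and it sits inside every part containing both points, so it is the least upper bound. You do state the correct characterisation (``the smallest part, among all invariant partitions, that contains both $p$ and $q$''), but the existence argument you attach to it is the flawed one above; the repair is to route the least upper bound through your own chain-infimum machinery, exactly as the paper does. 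With that correction your proof coincides with the paper's.
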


\begin{proof} We need to show  that
\begin{itemize}
\item[(i)] the supremum of any two vertices in $I$ is also a vertex in $I$, and
\item[(ii)] every descending chain of vertices in the tree which is bounded below has an infimum in the tree.
\end{itemize}

To show (i), consider two vertices $p_1, p_2 \in I$ that are parts of two partitions  $\pi_{1}, \pi_{2}$, respectively. Without loss of generality, assume that $\pi_{1}$ refines $\pi_{2}$. Then either $p_{1} \subseteq p_{2}$ (and $p_{2}$ is the supremum of $p_{1}$ and $p_{2}$) or $p_{1} \subseteq p'_{2} \in \pi_{2}$. So this problem reduces to showing that the supremum of any two vertices on the same level is in $I$.

We know that  $p_{2} ,  p'_{2}  \subseteq p$ with $p \in \pi$, for some $\pi \in I$ which coarsens $\pi_{2}$ --- for instance $\{ X \}$ itself. Let $\sim_\pi$ be the equivalence relation corresponding to $\pi$. Then $a \sim_\pi b$ for $a, b \in p_{2} ,  p'_{2}$ respectively. 

Consider the set of partitions $\pi^\prime$ that refine $\pi$ for which $a \sim_{\pi^\prime} b$, where $\sim_{\pi^\prime}$ is the corresponding equivalence relation.  By Lemma~\ref{F3} this is a descending chain of partitions. If the set of parts containing both $a$ and $b$ has an infimum, then $p_{2} ,  p'_{2}$ have a supremum. So the verification of (i) reduces to that of (ii).

For (ii), consider a descending chain of vertices $p_{\gamma}$ that are parts of a descending chain of partitions $\pi_{\gamma}$ bounded below by $p$, say, where $p \neq \varnothing$.  Let $\sim_{\gamma}$ be the equivalence relation corresponding to $\pi_{\gamma}$. Then define $x \sim y$ if $ x \sim_{\gamma} y$ for all $\gamma$. Let $f$ be a lower isomorphism of $(X, \leqslant)$. Then $x \sim y$ implies $f(x) \sim_{\gamma} f(y)$ for all $\gamma$ because each of the $\sim_{\gamma}$ is an invariant relation. Hence $f(x) \sim f(y)$ and so $\sim $ is an invariant relation. If $\pi$ is the corresponding partition, then $\pi$ is a  partition into lower 1-transitive, lower isomorphic convex subsets of $X$, and so its parts are vertices in $I$. Then $p$ is contained in some member of $p^\prime$ of $\pi$, and $\pi^\prime$ is the infimum of the $p_{\gamma}$.
\end{proof}

\begin{theorem}
\label{T4}
The invariant tree $I$ of a lower 1-transitive linear order $(X, \leqslant)$ is an expanded coding tree whose leaves are order-isomorphic to $(X, \leqslant)$.
\end{theorem}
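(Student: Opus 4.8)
The plan is to verify, one by one, the nine defining conditions of an expanded coding tree in Definition~\ref{A3} for the invariant tree $I$, and then to check the final clause that $\mathrm{leaf}(I)$, under the leaf order, is order-isomorphic to $(X,\leqslant)$. Several conditions are immediate from the set-up. The tree structure and condition~1 follow from Lemma~\ref{F3}: since any two nontrivial invariant partitions are comparable under refinement, the coarser partitions containing a fixed part $p$ form a chain (each coarser partition has exactly one part containing $p$), so the upper bounds of $p$ in $(I,\subseteq)$ are linearly ordered and $X$ is a common upper bound. Moreover the ``no part in common'' clause of Lemma~\ref{F3} guarantees that each part lies on a unique level, so the levelling by invariant partitions (with $\ll$ the reverse refinement order) is well defined. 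Condition~3 holds because the leaves are the singletons $\{x\}$ and $X$ is countable; condition~4 holds since every part $p$ contains some $x$, whence $\{x\}\subseteq p$; and condition~5 is exactly Lemma~\ref{F4}. The relation $\triangleleft$ of condition~2 is defined on each level by the induced order on convex parts ($p\triangleleft q$ iff every element of $p$ precedes every element of $q$), which linearly orders each level and leaves distinct levels $\triangleleft$-incomparable.

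The two substantive groups of conditions are~6,~8 (the admissible $\triangleleft$-order types of the children of a parent) and~7,~9 (homogeneity across a level). For~7 and~9 I would use that the parts on a common level are pairwise lower isomorphic: given parts $p,q$ and points $a\in p$, $b\in q$, lower 1-transitivity of $X$ gives an isomorphism $(-\infty,a]\to(-\infty,b]$ taking $a$ to $b$, and invariance forces it to map the part of $a$ onto the part of $b$; letting $a,b$ range over $p,q$ yields $p\cong_l q$. Such a lower isomorphism preserves the invariant partitions on initial segments and is order-preserving, so it induces an order-isomorphism of the \emph{left} forests of $p$ and $q$ respecting $\leqslant,\ll,\triangleleft$. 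This is precisely why left forests (rather than full forests) appear, since the ``right child'' of a parent is a final-segment part and need not be preserved by maps of initial segments. The same mechanism shows that whether a part is a parent, a leaf, or a single-cone vertex is constant along a level, giving condition~7 (the single-cone vertices being the non-leaf parts with no immediate child, which arise as intersections of descending chains supplied by Lemma~\ref{F4}).

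The heart of the argument, and the step I expect to be the main obstacle, is conditions~6 and~8: identifying the $\triangleleft$-order type of $\mathrm{child}(p)$ for a parent $p$. The children of $p$ are the parts of the next finer invariant partition lying inside $p$, so, colouring each child by the isomorphism type of the subtree it roots, the coloured order $(\mathrm{child}(p),\triangleleft)$ is the quotient of $p$ by that finer invariant relation. By the argument of Lemma~\ref{F2} this quotient is countable and lower 1-transitive, and because the refinement is immediate it admits no proper nontrivial invariant partition, i.e.\ it is \emph{primitive}. I would then classify the primitive countable lower 1-transitive coloured linear orders. Using the (invariant, convex) finite-condensation relation, primitivity splits into two cases: in the discrete case the order is $\Z$ or $\omega^*$ (no other discrete order is lower 1-transitive, since a nonsingleton initial segment would otherwise occur), and a homogeneity argument forces a single subtree type, giving alternative~8(i); in the dense case a back-and-forth argument identifies the order, according to whether there is a greatest element (only a top endpoint can occur in a lower 1-transitive order) and how many colours appear, with $\Q$ or $\dot{\Q}$ (one colour, again~8(i)) or with $\Q_n$, $\dot{\Q}_n$ for $2\leqslant n\leqslant\aleph_0$ (alternatives~8(ii),(iii)). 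This produces exactly the admissible list of condition~6. The delicate points are ruling out nontrivial colourings and intermediate orders in the discrete case and pinning down the endpoint behaviour in the dense case; this is where the coloured analogue of Morel's classification (as in \cite{TrussGabi}) does the real work.

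Finally, the leaf clause is immediate: the leaves are the singletons $\{x\}$, they occupy the finest level, and $\{x\}\triangleleft\{y\}$ iff $x<y$, so the leaf order $\triangleleft^*$ coincides with $\leqslant$ under $x\mapsto\{x\}$, giving $\mathrm{leaf}(I)\cong(X,\leqslant)$.
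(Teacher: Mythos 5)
Your proposal is correct and follows essentially the same route as the paper: both verify the conditions of Definition~\ref{A3} one at a time, using Lemma~\ref{F4} for Dedekind-MacNeille completeness, Lemma~\ref{F2} together with the finite-condensation relation $\sim_{\mathrm{fin}}$ and the fact that consecutive levels admit no intermediate invariant partition (your ``primitivity'') to pin down the child order types in conditions~6 and~8, and lower 1-transitivity plus invariance to get the level-homogeneity conditions~7 and~9. The only cosmetic difference is that where you defer the dense multi-colour case to the coloured Morel classification of \cite{TrussGabi}, the paper proves that step directly, by observing that if some isomorphism type $\gamma$ failed to occur between some pair of children then ``no point of type $\gamma$ in between'' would define a proper nontrivial invariant convex partition, contradicting the consecutiveness of the two levels.
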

\begin{proof}
Firstly, the leaves of $I$ are singletons containing the elements of $X$, and so they are isomorphic to $X$.

Definition~\ref{D3} ensures that $I$ is a levelled tree whose root is $X$. The tree ordering is containment, the ordering of the levels is the one induced by $\subseteq$ on the set of invariant partitions of $X$, and the ordering of the children of a parent vertex is the one induced by the linear order on $X$. Since $X$ is countable, $I$ has countably many leaves. It is clear that every vertex of $I$ is a leaf or is above a leaf. So conditions 1 to 4 of Definition~\ref{A3} are satisfied. Moreover, $I$ is Dedekind-MacNeille complete by Lemma~\ref{F4}.


In order to verify condition 6 of Definition~\ref{A3}, we need to show that the order type of the children of a parent vertex in $I$ is one of $\Z$, ${\omega}^{\ast}$, $\Q$, $\dot{\Q}$, ${\Q}_{n}$ or ${\dot{\Q}}_n$ ( for $2 \leqslant n \leqslant {\aleph}_{0}$). Consider a successor level $\pi_{i+1}$ of $I$, so $\pi_{i}$ is the predecessor. Let $p \in \pi_{i + 1}$. Then $p$ is lower 1-transitive, and the children of $p$ are those elements of $\pi_{i}$ which are convex subsets of $p$. These children are lower 1-transitive linear orders and are lower isomorphic to each other. Let $\sim_{\pi_i}$ be the equivalence relation that defines $\pi_{i}$. Then, by Lemma~\ref{F2}, $p / \sim_{\pi_i}$ is also lower 1-transitive, and the order type of $p / \sim_{\pi_{i}}$ tells us how the children of $p$ are ordered. In order to describe the possible order types, we look at the structure forced by a specific invariant equivalence relation, namely, the relation $\sim_{\mathrm{fin}}$ that identifies points that are finitely far apart,  defined by
\[ x \sim_{\mathrm{fin} } y \ \mbox{ iff } \ x \leqslant y \ \mbox{ and } [x, y] \ \mbox{ is finite, or } \ y \leqslant x \ \mbox{ and } \ [y, x] \ \mbox{ is finite. }\]

For any linear order, the equivalence classes of $\sim_{\mathrm{fin}}$ must be either finite, $\omega$, ${\omega}^{\ast}$ or ${\Z}$. If $(X, <)$ is lower 1-transitive, the equivalence classes of this form are either singletons,  ${\omega}^{\ast}$, or ${\Z}$. If one equivalence class is a singleton, then they all are, and then the ordering is dense with no least endpoint. Hence it is isomorphic to $\Q$ or $\dot{\Q}$.

Since $p / \sim_{\pi_{i}}$ is a lower 1-transitive linear order, we can take its quotient by $\sim_{\mathrm{fin}}$. There are two cases.

Case 1: the equivalence classes of $(p / \sim_{\pi_{i}}) / \sim_{\mathrm{fin} }$ are non-trivial. Then, by the maximality of $I$, there can be only one equivalence class, that is, $p / \sim_{\pi_{i}}$ itself. If there is no last child, then $p$ is equal to $\Z$ copies of its children; otherwise, the order type of $p / \sim_{\pi_{i}}$ is ${\omega}^{\ast}$.

Case 2: the equivalence classes of $(p / \sim_{\pi_{i}}) / \sim_{\mathrm{fin} }$ are trivial.
Then the parts of $\pi_{i}$ are dense within $p$. We aim to show that $p / \sim_{\pi_{i}}$ is a ${\Q}$, $\dot{\Q}$, ${\Q}_{n}$ or ${\dot{\Q}}_{n}$ combination of its children.

If all the left children of $p$ are isomorphic, then $\mathrm{child}(p)$ is isomorphic to $ {\Q}$, or $\dot{\Q}$ if the right child exists.


If not all the left children of $p$ are isomorphic, then we show that $\mathrm{child}(p)$ is isomorphic to $ {\mathbb Q}_n$, or ${\dot{\mathbb Q}}_n$ if $p$ has a
right child, where the set $\Gamma$ of (colour, order-)isomorphism types of the left children of $p$ has size $n$. Suppose, for a contradiction,
that $p$ is not the ${\mathbb Q}_n$ mixture of its children. 
Then there are two elements of $\Gamma$ such that not all other elements of $\Gamma$ occur between them in $p$. Let $\gamma$ be a member of $\Gamma$ which does not occur
between all pairs, and let us define $\sim$ on $\pi$ by $y \sim z$ if $y = z$, or if no point of $[y, z]$ (or $[z, y]$ if $z < y$) has isomorphism type
$\gamma$. This is an invariant partition of $\pi$ into convex pieces, and is proper and non-trivial, which contradicts $\pi_i$ and $\pi_{i+1}$ being on consecutive levels.

This verifies condition 6 of Definition~\ref{A3} for a parent vertex on a successor level of the invariant tree $I$.

Now consider the levels which are not successor levels. Firstly, this includes the trivial partition, $\pi_{0}$, given by the relation $ x \sim_{\pi_{0}} y \iff x = y$. These vertices are leaves. 

There remains the case of vertices which do not have children in $I$. If one part of an invariant partition does not have a child then, clearly, none of them do. Dedekind-MacNeille completeness implies that these vertices have one cone below them.

For condition 7, let $x$ and $ x'$ be two vertices of $I$ on the same level. Then $x, x^\prime$ are parts of an invariant partition, so either they are both parents, or they are both leaves, or both are neither of these, in which case they have a single cone below them. Moreover, if $x,  x'$ are both parent vertices, then $({\rm child}(x),\triangleleft)$ is lower-isomorphic to $({\rm child}(x'),\triangleleft)$, since $(X, \leqslant)$ is lower 1-transitive and $x$ and $ x'$ are parts of an invariant partition.


For condition 8, let $x \in I$ be a parent vertex. Suppose that $({\rm child}(x), \triangleleft) \cong {\Q}$, $\dot{\Q}$, ${\Q}_{n}$ or ${\dot{\Q}}_n$. Here two children vertices $a, b$ have the same colour when they are isomorphic. This isomorphism induces an isomorphism on the trees rooted at $a, b$.
If $({\rm child}(x), \triangleleft) \cong {\Z}$, we wish to show the children of $x$ are all isomorphic, and hence the trees below the children are isomorphic. Now, the children of $x$ are all a finite distance apart. In particular, each child has a successor and a predecessor. If $a$ and $b$ are children of $x$, the existence of an isomorphism from the successor of $a$ to the successor of $b$ implies that $a$ and $b$ are isomorphic. The argument in the case $({\rm child}(x), \triangleleft) \cong {\omega}^{\ast}$ is similar.

Finally we show $I$ satisfies condition 9 of~Definition~\ref{A3}.

Let $x$ and $y$ be distinct vertices on the same level. If $x$ and $y$ have no children, the condition holds trivially. So suppose that $x$ and $y$ are parent vertices and let $a \in x$ and $b \in y$. By lower 1-transitivity, there is an isomorphism $\varphi : (- \infty , a] \to (- \infty , b]$ 
which induces an isomorphism between $(- \infty , a] \cap x$ and $(- \infty , b] \cap y$. Let $x_{a}, y_{b}$ be the children of $x, y$ containing $a, b$ respectively. Then  $(- \infty , a] \cap x_{a}$ and $(- \infty , b] \cap y_{b}$ are isomorphic. Consider the sets $\Gamma_{a}$, $\Gamma_{b}$ of children of $x, y$ to the left of  $x_{a}, y_{b}$ respectively.  Since $\varphi (\Gamma_{a}) = \Gamma_{b}$, the left forests of $x$ and $y$ are isomorphic.  Since $a, b$ are arbitrary,  $\Gamma_{a}$ and $  \Gamma_{b}$ can contain any particular left children of $x$ and $y$.
\end{proof}

We now must show how to construct a coding tree for $(X, \leqslant)$ given the invariant tree $I$, and give an inverse association map between them.\\
Informally, the coding tree is obtained from $I$ by amalgamating left children who are siblings and whose trees of descendants are isomorphic. The parent vertex is then labelled according to the order type of its children in $I$.\\
For each level $s$ of $I$ we define a relation $\simeq_{s}$ on $I$ that tells us which vertices to amalgamate:\\[3pt]
$x \simeq_{s} y \text{ if there are } x' \supseteq x, \ y' \supseteq y$ such that
\begin{itemize}
\item[(i)] the tree of descendants of $x'$ is isomorphic to the tree of descendants $y'$ under $\theta$, a suitable  order isomorphism (respecting both the $\leqslant$ order and the $\vartriangleleft$ order)
\item[(ii)] $x', y'$ are left children of a vertex $z$ and lie on level $s$, or $x' = y'$
\item[(iii)] $\theta(x) = y$.
\end{itemize}
Note that the clauses guarantee that $x, y$ are level. Now we define a relation $\simeq$ on the whole of $E$ as follows:
 \[x \simeq y \iff \exists x = x_{0},  \ldots , x_{n} = y, \text{ where for each } i=0, \ldots, n-1 \text{ there is } s_i \text{ with } x_{i} \simeq_{s_i}   x_{i + 1}.\]
The relation $\simeq$ is an equivalence relation on $I$, and $T$ is then the set of equivalence classes on $I$, labelled as described above. We denote an element of $T$ by $[x]$, where $x \in I$. The next lemma ensures that the ordering on $I$ induces one on $T$.

\begin{lemma}\label{lemma48}
Let $[x],\,[y]\in T$ be such that $x \leqslant y$ (in $I$), and let $x^\prime \in [x]$. Then there is $y^\prime \in [y]$ such that $x^\prime \leqslant y^\prime$.
\end{lemma}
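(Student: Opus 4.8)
The plan is to prove the statement by establishing it first for the generating relation $\simeq_{s}$ and then propagating it along the finite chain $x = x_0 \simeq_{s_0} x_1 \simeq_{s_1} \cdots \simeq_{s_{n-1}} x_n = x'$ that witnesses $x \simeq x'$. Concretely, I want a lemma of the form: if $u \simeq_{s} u'$ via the order isomorphism $\theta$, and $w \geqslant u$ in $I$, then there is $w' \geqslant u'$ with $w \simeq w'$ (indeed $w' = \theta(w)$ whenever $w$ lies in the tree of descendants of the common ancestor $z$, and $w' = w$ once $w$ is at or above the level where $\theta$ acts trivially). Granting this single-step version, I would induct on $n$: given $x' \leqslant y$ with $x \leqslant y$, I push the upper bound $y$ across each $\simeq_{s_i}$ step, producing $y = y_0, y_1, \dots, y_n$ with $y_i \geqslant x_i$ and $y_{i+1} \simeq y_i$ (or $y_{i+1} = y_i$), so that the terminal $y' := y_n$ satisfies $x' = x_n \leqslant y'$ and $y \simeq y'$, i.e.\ $y' \in [y]$, as required.

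First I would unwind the definition of $\simeq_{s_i}$ at a single step. By clauses (i)--(iii) there are $u_i \supseteq x_i$, $u_i' \supseteq x_{i+1}$ that are either equal or are left children of a common vertex $z_i$ on level $s_i$, together with a tree isomorphism $\theta_i$ from the descendants of $u_i$ onto those of $u_i'$ with $\theta_i(x_i) = x_{i+1}$. The key observation is that $\theta_i$ preserves the tree order $\leqslant$, so it carries the portion of an upper bound lying inside the subtree rooted at $u_i$ to a corresponding upper bound inside the subtree rooted at $u_i'$; and for vertices $y_i$ that are $\geqslant u_i$ (hence $\geqslant z_i$, so outside either subtree) I simply set $y_{i+1} = y_i$, which still dominates $x_{i+1}$ because $x_{i+1} \leqslant u_i' \leqslant z_i \leqslant y_i$. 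The two cases — $y_i$ inside the subtree of $u_i$ versus $y_i$ at or above $z_i$ — are exhaustive since in a tree the bounds above $x_i$ form a chain through $z_i$, so I would split on whether $y_i \leqslant z_i$ or $y_i \geqslant z_i$.

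I expect the main obstacle to be the case analysis just flagged: ensuring that every upper bound $y_i$ of $x_i$ falls cleanly into one of the two regimes and that the produced $y_{i+1}$ is genuinely $\simeq$-related to $y_i$. When $y_i$ lies strictly inside the subtree of $u_i$, I set $y_{i+1} = \theta_i(y_i)$, and the relation $y_i \simeq_{s} y_{i+1}$ for the appropriate level $s$ follows by applying $\theta_i$ (restricted to the subtree below $y_i$, which maps isomorphically onto the subtree below $\theta_i(y_i)$) — this is exactly the single-step lemma invoked one level higher, so care is needed to see that $\theta_i$ does restrict to a witnessing isomorphism at the level of $y_i$ and that $\theta_i(y_i)$ is again a left child in an isomorphic sibling position. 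When $y_i \geqslant z_i$ I take $y_{i+1}=y_i$ and there is nothing to prove. Since $\theta_i$ respects both $\leqslant$ and $\vartriangleleft$ and the invariant tree's levels are invariant partitions, these restricted maps continue to witness $\simeq$, and the induction closes; the finiteness of the chain guarantees the process terminates at the desired $y' \in [y]$.
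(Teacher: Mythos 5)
Your proof is correct and takes essentially the same route as the paper's: the heart of both arguments is the case split on whether the upper bound lies in the subtree below the witnessing left child $u$ (push it through $\theta$, noting that $\theta(y)\simeq y$ via the very same witnessing data, so no restriction or repositioning of $\theta$ is actually needed) or at or above the common parent $z$ (keep it unchanged). The only difference is that you spell out the induction along the finite chain $x=x_0\simeq_{s_0}\cdots\simeq_{s_{n-1}}x_n=x'$ coming from the definition of $\simeq$ as a transitive closure, whereas the paper's proof is written as if $x'\in[x]$ were always witnessed by a single isomorphism; your version is, if anything, the more complete one.
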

\begin{proof}
Let $x, y$ and $x^\prime$ be as in the statement. Since $x^\prime \in [x]$,  there are $u, v$ and $w$ in $I$ such that $u, v$ are left children of $w$ and $x \leq u$, $x^\prime \leqslant v$. Moreover, the tree of descendants of $u$ is isomorphic to the tree of descendants of $v$ by an isomorphism $\theta$ such that $\theta(x)=x^\prime$. Now, either $y \geqslant w$ or $y <w$. If $y \geqslant w$ then $x^\prime < w \leqslant y$, so $y$ is the required $y^\prime$. If $y<w$, then  $y \leqslant u$. Then $x^\prime = \theta(x) \leqslant \theta(y)$ and, since $\theta(y) \leqslant v$, this implies that $x^\prime \leqslant v$. But $\theta(y) \in [y]$ because of the way $\simeq$ is defined, so $\theta(y)$ is the required $y^\prime$. 
\end{proof}

\begin{theorem}\label{t49}
The set of $\simeq$-classes on the invariant tree of $(X, \leqslant)$ is a coding tree for $(X, \leqslant)$.
\end{theorem}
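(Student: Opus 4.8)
The plan is to verify that $T$ meets every clause of Definition~\ref{A1} and then to exhibit the quotient map $\phi\colon I\to T$, $\phi(x)=[x]$, as an association map in the sense of Definition~\ref{d1}; combined with Theorem~\ref{T4} (which gives that the leaves of $I$ are order-isomorphic to $(X,\leqslant)$) and Definition~\ref{d2}, this shows that $T$ encodes $(X,\leqslant)$, so that $T$ is a coding tree \emph{for} $(X,\leqslant)$. First I would record the structure carried by $T$: the tree order is the one induced by $\subseteq$ on $I$ and is well defined by Lemma~\ref{lemma48}; since each $\simeq_s$, and hence $\simeq$, identifies only vertices lying on a common level of $I$, the levelling $\ll$ and the ordering $\triangleleft$ of children descend to $T$, and the class $[X]$ of the root is a singleton, so $T$ has a greatest element.

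For the conditions of Definition~\ref{A1}, clauses 1--4 (levelled tree with root, countably many leaves, every vertex a leaf or above one, Dedekind-MacNeille completeness) pass to $T$ from the corresponding properties of $I$ established in Theorem~\ref{T4}, using that $\phi$ preserves levels and, for completeness, that ramification points of $I$ survive in the quotient. The labelling (clause 5) is the one prescribed before the theorem: a parent vertex $[x]$ receives the $\triangleleft$-order type of its children in $I$, which by clause 6 of Definition~\ref{A3} is one of $\Z,{\omega}^{\ast},\Q,\dot{\Q},{\Q}_n,{\dot{\Q}}_n$; leaves receive $\{1\}$ and childless non-leaves receive $\mathrm{lim}$. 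One checks this is independent of the representative and that clause 6 holds because two vertices on one level of $T$ come from vertices on one level of $I$, whose children are lower-isomorphic by clause 7 of Definition~\ref{A3}.

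The combinatorial heart of Part A is clauses 7, 8 and 9, all controlled by the amalgamation. For clause 7 I would argue case by case on the label, using clause 8 of Definition~\ref{A3}: when the children of $x$ in $I$ have order type $\Z$ or $\Q$ all left children carry isomorphic trees and so collapse to a single $\simeq$-class, giving one child of $[x]$; the cases ${\omega}^{\ast}$ and $\dot{\Q}$ add the (unamalgamated) right child to give two; and in the dense cases the left children fall into exactly $n$ isomorphism classes, yielding $n$ children (${\Q}_n$) or $n+1$ with the right child (${\dot{\Q}}_n$). Clause 9 is essentially the defining property of the amalgamation, but it requires the key lemma that $x\simeq y$ implies the subtrees rooted at $x$ and $y$ are isomorphic: each $\simeq_s$-step comes with an isomorphism $\theta$ whose restriction carries the subtree of $x$ onto that of $y$, and one composes these along a $\simeq$-chain. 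Since $\simeq$ preserves levels, such a chain linking two left children of a common parent in $I$ forces their subtrees to be isomorphic; contrapositively, distinct left children of $[x]$ have non-isomorphic subtrees, and because the quotient is determined intrinsically by isomorphism type these subtrees remain non-isomorphic in $T$. Clause 8 then follows from clause 9 of Definition~\ref{A3} for $I$: isomorphic left forests in $I$ have isomorphic amalgamations, the amalgamation being canonical in the isomorphism type.

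I expect the main obstacle to be precisely this control of $\simeq$: because $\simeq$ is the transitive closure of the relations $\simeq_s$ taken over all levels $s$ and also over the $x'=y'$ clause (automorphisms of a single subtree), I must rule out that the closure merges two left children of one parent whose subtrees are non-isomorphic. The subtree-isomorphism lemma above is the tool for this, and verifying that it survives the transitive closure is the delicate step. Once Part A is in place, Part B is routine: $\phi(x)=[x]$ sends root to root and leaves to leaves, is order-preserving and level-preserving by construction, is onto down-sets and leaf-to-root intervals by Lemma~\ref{lemma48} and its dual, and satisfies clause (iv) of Definition~\ref{d1} because the labelling of $T$ was defined to match the order types of children in $I$ and because $\simeq$ identifies exactly the isomorphic left children required by the ${\Q}_n$ and ${\dot{\Q}}_n$ clauses. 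Hence $I$ is associated with $T$, and as its leaves are order-isomorphic to $(X,\leqslant)$ by Theorem~\ref{T4}, $T$ encodes $(X,\leqslant)$.
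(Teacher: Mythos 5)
Your proposal is correct and follows essentially the same route as the paper's proof: the same induced order via Lemma~\ref{lemma48}, the same representative-independent labelling, the same case analysis by label for clause 7, the same appeal to the amalgamation for clauses 8 and 9, and the same derivation of Dedekind-MacNeille completeness from that of $I$. The only structural difference is that your Part B (the association map $\phi(x)=[x]$ and the claim that $T$ encodes $(X,\leqslant)$) is deferred by the paper to the subsequent Theorem~\ref{C2}, where it is carried out exactly as you describe.
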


\begin{proof}
Let $T$ be the family of $\simeq$-equivalence classes on $I$. Let $[x], [y] \in T$ and define
\[ [x] \leqslant [y] \iff (\exists x^\prime \in [x]) (\exists y^\prime \in [y]) (x^\prime \leqslant y^\prime) \ \text{ (in } I).\]
Lemma~\ref{lemma48} ensures that $\leqslant$ is well defined and transitive, so $\leqslant$ is an order. 

Since $\leqslant$ is the order induced by that on $I$, $T$ is a tree with root $[r]$ and, since $\simeq$ is level preserving, $T$ is a levelled tree. Moreover, $T$ is countable, and every vertex of $T$ is a leaf or is above a leaf. We verify Dedekind-MacNeille completeness. Firstly note that all leaf-branches of $T$ are isomorphic to some leaf-branch of $I$ and so the leaf-branches of $T$ are Dedekind complete. We must now show that the least upper bound of any two vertices $[x], [y] \in T$ is in $T$. Since $I$ is Dedekind-MacNeille complete, any $x^\prime \in [x]$ and $y^\prime \in [y]$ have a least upper bound in $I$. Let
\[ \Gamma = \{ z \in I : z \text{ is the least upper bound of } x^\prime \text{ and } y^\prime \text{ for some } x^\prime \in [x], y^\prime \in [y] \}. \]
If $z \in \Gamma$, then $[x^\prime]=[x] \leqslant [z]$ and $[y^\prime] = [y] \leqslant [z]$ for some $x^\prime, y^\prime$, and so $[z]$ is an upper bound for $[x]$ and $[y]$. Now let
$ \Gamma^\prime = \{ [z] : z \in \Gamma \}$.
Since $\Gamma^\prime$ contains the upper bounds of $[x]$ and $[y]$, it is linearly ordered. Moreover, it is bounded above by $[r]$ and below by $[x]$. Let $\Gamma$ be the chain 
\[ \ldots [z_{-n}] \geqslant \ldots \geqslant [z_{0}] \geqslant [z_{1}] \geqslant \ldots\ [z_n] \geqslant \ldots \]
By Lemma~\ref{lemma48}, for any $u \in [z_{i+1}]$ there is $v \in [z_i]$ such that $u \leqslant v $. Hence we can construct a corresponding chain of vertices in $I$. 
If the $[z_{i}]$ do not have an infimum, then there is a chain of vertices in $I$ bounded below by $x \in [x]$ and  without an infimum. This contradicts the Dedekind-MacNeille completeness of $I$. Then the infimum of $\Gamma^\prime$ is the least upper bound of $[x]$ and $[y]$.

Next we examine the labelling. Suppose $x  \in I$ is a parent vertex. Then $[x] \in T$ is also a parent vertex and we let $\varsigma([x]) = ({\rm child}(x), \triangleleft)$, the order type of the children of $x$ in $I$. This is well defined, as $x \simeq y$ implies that $x$ and $y$ are isomorphic and hence the sets of their children have the same order type. Now, since $({\rm child}(x), \triangleleft)$ is one of  $\Z$, ${\omega}^{\ast}$,
$\Q$, $\dot{\Q}$, ${\Q}_{n}$, ${\dot{\Q}}_n$ ( for $2 \leqslant n \leqslant {\aleph}_{0}$), it follows that $\varsigma([x])$ is also one of the above.\\
If $x$ is neither a parent nor a leaf, then neither is $[x]$. Hence we label $[x]$ by $\mathrm{lim}$. The leaves are labelled $\{ 1 \}$.\\
Let $[x], [y] \in T$ be level parent vertices and let $x, y \in I$ be representatives. Then  $\varsigma ([x]) \  {\cong}_{l}  \ \varsigma ([y])$ follows from the fact that  $({\rm child}(x), \triangleleft) \  {\cong}_{l}  \ ({\rm child}(y), \triangleleft)$.\\
When  $[x], [y] \in T$ are level but neither parent vertices nor leaves (if $[x]$ is not a parent vertex and  $[x]$ are $[y]$ level, then $[y]$ is not a parent vertex), both are labelled $\mathrm{lim}$, as remarked earlier. Hence  $\varsigma ([x]) = \varsigma ([y])$ as required. The case when $[x], [y] \in T$ are leaves is similar.\\
We now show that $T$ fulfils condition 7 of Definiton~\ref{A1}. The number of children of $[x] \in T$ is the number of equivalence classes of the children of vertices $x^\prime \in [x]$ in $I$.
We consider various cases.\\
Case 1: $({\rm child}(x), \triangleleft) \cong {\Z}, {\Q}$\\
All the children of $x$ are left children. We have also seen that they are all isomorphic and hence they are all $\simeq$-equivalent. Therefore there is one equivalence class below $[x]$.\\
Case 2: $({\rm child}(x), \triangleleft) \cong {\omega}^{\ast}, \dot{\Q}$\\
Again all the left children of $x$ are isomorphic and hence they are all $\simeq$-equivalent. A right child of $x$ forms its own equivalence class under $\simeq$. In these cases $[x]$ has two children.\\
Case 3. $({\rm child}(x), \triangleleft)\cong{\Q}_{n}$, ${\dot{\Q}}_n$.\\
The `colours' are the isomorphism types of the children of $x$ in $I$. There are $n$ isomorphism types amongst the left children. The left children which are isomorphic are also $\simeq$-equivalent. Hence there are $n$ ($n + 1$ in the case of ${\dot{\Q}}_n$) $\simeq$-classes below $[x]$.\\
Clause 8 of Definition~\ref{A1} follows from the corresponding fact about the expanded coding tree. Given two order isomorphic forests in the expanded coding tree, clearly the $\simeq$-classes on two such forests are also isomorphic.\\
Finally, since $\simeq$ amalgamates isomorphic trees of descendants of sibling left vertices, the tree of descendants of two sibling vertices in the resulting $T$ will not be isomorphic.
\end{proof}

\noindent We have obtained a coding tree from $(X, \leqslant)$. We have now to show that this tree does encode $(X, \leqslant)$.

\begin{theorem}
\label{C2}
The coding tree, $(T, \leqslant, \vartriangleleft, \varsigma, \ll)$ obtained from $(X, \leqslant)$ encodes $(X, \leqslant)$ in the sense of Definition~\ref{A3}.
\end{theorem}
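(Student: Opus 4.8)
The plan is to exhibit the invariant tree $I$ itself as an expanded coding tree associated with $T$, and to take as the association map the quotient map $\phi \colon I \to T$ given by $\phi(x) = [x]$. By Theorem~\ref{T4}, $I$ is an expanded coding tree whose leaves, under the leaf order induced by $\triangleleft$, are order-isomorphic to $(X, \leqslant)$. Hence, once I check that $\phi$ satisfies all the clauses of Definition~\ref{d1}, so that $I$ is associated with $T$ via $\phi$, Definition~\ref{d2} immediately yields that $T$ encodes $(X, \leqslant)$.

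Most of the clauses of Definition~\ref{d1} follow directly from the construction of $T$ in Theorem~\ref{t49}. The map $\phi$ sends the root $X$ of $I$ to the root $[X]$ of $T$ and each leaf $\{x\}$ to the leaf $[\{x\}]$, since leaves are amalgamated only with leaves; clause (i), namely $v_1 \leqslant v_2 \Rightarrow [v_1] \leqslant [v_2]$, is immediate from the definition of $\leqslant$ on $T$; and clause (ii) holds because $\simeq$ is level-preserving, so $\phi$ carries the level ordering $\ll$ of $I$ isomorphically onto that of $T$. For clause (iv), recall that $\varsigma([v]) = (\mathrm{child}(v), \triangleleft)$ whenever $v$ is a parent, so the order type of the children of $v$ is exactly $\varsigma(\phi(v))$ in the cases $\Z, {\omega}^{\ast}, \Q, \dot{\Q}$; in the cases $\Q_n, \dot{\Q}_n$ the requirement that $\phi(u) = \phi(u')$ for left children $u, u'$ of $v$ with isomorphic descendant trees is precisely the statement that such siblings are $\simeq_s$-equivalent, which holds by the definition of $\simeq$. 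The labels $\mathrm{lim}$ and $\{1\}$ are assigned to $[v]$ exactly when $v$ is, respectively, neither a parent nor a leaf, or a leaf, matching the remaining clauses.

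The substantive point is clause (iii): that $\phi$ maps the down-set $\{u \in I : u \leqslant v\}$ onto $\{w \in T : w \leqslant [v]\}$, and each leaf-branch $[l, r]$ of $I$ onto $[[l], [r]]$. The inclusion $\phi(\{u : u \leqslant v\}) \subseteq \{w : w \leqslant [v]\}$ is trivial; the content is surjectivity. This is a dual of Lemma~\ref{lemma48}: there one pushes a chosen representative of the lower class upward to find a representative of the upper class above it, whereas here, given a fixed representative $v$ of the upper class and a class $w \leqslant [v]$, one must produce a representative $u' \in w$ with $u' \leqslant v$. I would argue as follows. Since $w \leqslant [v]$, there are $u_1 \in w$ and $v_1 \in [v]$ with $u_1 \leqslant v_1$. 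As $v_1 \simeq v$, there is a finite chain of $\simeq_s$-steps joining $v_1$ to $v$, each step given by a sibling isomorphism $\theta$ between the descendant trees of two left children $a \supseteq v_i$ and $b \supseteq v_{i+1}$ with $\theta(v_i) = v_{i+1}$. Since $u_1 \leqslant v_1 \leqslant a$, the point $u_1$ lies in the descendant tree of $a$, so $\theta(u_1)$ is defined, satisfies $\theta(u_1) \leqslant \theta(v_1) = v_2$ by order-preservation, and has $[\theta(u_1)] = [u_1] = w$. Iterating along the chain transports $u_1$ to the required $u' \in w$ with $u' \leqslant v$. Injectivity of $\phi$ on each leaf-branch is automatic, since distinct vertices of a branch lie on distinct levels and $\simeq$ identifies only level vertices; combined with surjectivity this gives the claimed isomorphism $[l, r] \cong [[l], [r]]$.

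Granting clause (iii), $\phi$ is an association map, so $I$ is associated with $T$; since the leaves of $I$ are order-isomorphic to $(X, \leqslant)$ by Theorem~\ref{T4}, Definition~\ref{d2} gives that $T$ encodes $(X, \leqslant)$. I expect the transport argument for surjectivity in clause (iii) to be the only real obstacle: it is where the combinatorics of $\simeq$ interact with the tree order, and it is essentially a restatement and mild extension of Lemma~\ref{lemma48}, so the residual risk is only in bookkeeping the finite chain of sibling isomorphisms correctly.
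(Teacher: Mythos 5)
Your proposal is correct and takes essentially the same approach as the paper: both exhibit the invariant tree $I$ as the expanded coding tree associated with $T$ via the quotient map $\phi(x)=[x]$, check the clauses of Definition~\ref{d1}, and then conclude from Theorem~\ref{T4} and Definition~\ref{d2}. The only difference is that the paper dismisses clauses (i)--(iii) of Definition~\ref{d1} as clear from the construction, whereas you correctly single out clause (iii) as the substantive point and supply an explicit transport argument (iterating the sibling isomorphisms along a finite $\simeq_s$-chain, dually to Lemma~\ref{lemma48}), which fills in a detail the paper glosses over.
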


\begin{proof}
Firstly we show that the expanded coding tree $I$ of invariant partitions of $X$ is associated with $T$ in the sense of Definition~\ref{d1}. The association function $\phi \rightarrow T$ is defined by $\phi(x)=[x]$, and the labelling function on $T$ is defined as follows:
\begin{itemize}
\item[(i)] if $x$ is a parent vertex, the label of $\phi(x)$ is equal to $({\rm child}(x), \triangleleft)$, the (coloured) order type of the children of $x$ in $I$,
\item[(ii)] if $x$ is neither a parent nor a leaf, the label of $\phi(x)$ is $\mathrm{lim}$,
\item[(iii)] if $x$ is a leaf, the label of $\phi(x)$ is $\{ 1 \}$.
\end{itemize}
As remarked in the proof of Theorem~\ref{t49}, this labelling is well defined. Moreover, the labels satisfy condition (iv) of Definition~\ref{d1}. By the way $T$ is constructed, it is clear that $\phi$ preserves levels. Moreover, the ordering on $T$ is such that $x \leqslant y$ in $I$ implies that $\phi(x) \leqslant \phi(y)$ in $T$. This ensures that conditions (i), (ii) and (iii) of Definition~\ref{d1} are satisfied.

The construction of $I$ ensures that $X$ is order isomorphic to the set of leaves of $I$. Therefore $T$ encodes the linear order $X$ in the sense of Definition~\ref{d2}, as required.

\end{proof}

Therefore  $(T, \leqslant, \varsigma, \ll, \triangleleft)$ encodes $(X, \leqslant)$. Furthermore the set of $\simeq$-classes on the expanded coding tree $( E, \leqslant ,  \ll, \triangleleft)$ constructed from the coding tree $(T, \leqslant, \varsigma, \ll, \triangleleft)$ is isomorphic to $(T, \leqslant, \varsigma, \ll, \triangleleft)$, so the two procedures, from coding tree to encoded order, back to coding tree are converse operations.\\

Theorem~\ref{C2} concludes our classification of countable lower 1-transitive linear orders. The companion paper \cite{me} is a major extension of this work, since it classifies countable 1-transitive trees. The branches of these trees are countable lower 1-transitive linear orders. However, two non-isomorphic trees can have branch sets where the branches are isomorphic as linear orders. In order to consider the way lower 1-transitive linear orders embed in the trees, it is necessary to consider the ramification points of the trees. These points might not be vertices of the tree, and different types of ramification points give rise to the notion of \textit{colour lower 1-transitivity}. The starting point in \cite{me} is the classification of coloured 1-transitive linear orders. In order to give a complete description of each countable 1-transitive tree, it is then necessary to consider the number and type of cones at each ramification point.


\bibliographystyle{plain}
\bibliography{refs}

\end{document}